\providecommand\@dotsep{5}
\newtheorem{theorem}{Theorem}
\newtheorem{proposition}[theorem]{Proposition}
\newtheorem{corollary}[theorem]{Corollary}
\theoremstyle{definition}
\newtheorem{definition}[theorem]{Definition}
\newtheorem{examples}[theorem]{Examples}
\newtheorem{remark}[theorem]{Remark}
\numberwithin{theorem}{section}
\numberwithin{equation}{section}
\renewcommand{\phi}{\varphi}
\newcommand{\Sbb}{\mathbb{S}}
\newcommand{\ZZ}{\mathbb{Z}}
\newcommand{\id}{\mathrm{id}}
\newcommand{\End}{\mathrm{End}}
\newcommand{\upB}{\mathrm{B}}
\newcommand{\upH}{\mathrm{H}}
\newcommand{\upS}{\mathrm{S}}
\newcommand{\gl}{\mathrm{gl}}
\newcommand{\GL}{\mathrm{GL}}
\newcommand{\calA}{\mathcal{A}}
\newcommand{\calC}{\mathcal{C}}
\newcommand{\calG}{\mathcal{G}}
\newcommand{\calM}{\mathcal{M}}
\newcommand{\calV}{\mathcal{V}}
\newcommand{\calW}{\mathcal{W}}
\newcommand{\bfk}{\mathbf{k}}
\newcommand{\bfK}{\mathbf{K}}
\newcommand{\ko}{\mathbf{ko}}
\newcommand{\Az}{\mathbf{Az}}
\newcommand{\Pic}{\mathrm{Pic}}
\newcommand{\Br}{\mathrm{Br}}
\newcommand{\Wh}{\mathrm{Wh}}
\newcommand{\Picspace}{\mathbf{Pic}}
\newcommand{\pic}{\mathbf{pic}}
\newcommand{\Brspace}{\mathbf{Br}}
\newcommand{\br}{\mathbf{br}}
\begin{document}

\title{Brauer spaces for commutative rings\\ and structured ring spectra}

\author{Markus Szymik}

\newdateformat{mydate}{\monthname\ \THEYEAR}
\mydate

\date{\today}

\maketitle

\renewcommand{\abstractname}{}

\begin{abstract}
\noindent
Using an analogy between the Brauer groups in algebra and the Whitehead groups in topology, we first use methods of algebraic K-theory to give a natural definition of Brauer spectra for commutative rings, such that their homotopy groups are given by the Brauer group, the Picard group and the group of units. Then, in the context of structured ring spectra, the same idea leads to two-fold non-connected deloopings of the spectra of units. 


\vspace{\baselineskip}
\noindent Keywords: Brauer groups and Morita theory, Whitehead groups and simple homotopy theory, algebraic K-theory spaces, structured ring spectra

\vspace{\baselineskip}
\noindent MSC: 19C30, 55P43, 57Q10
\end{abstract}


\section*{Introduction}

Let~$K$ be a field, and let us consider all finite-dimensional associative~$K$-algebras~$A$ up to isomorphism. For many purposes, a much coarser equivalence relation than isomorphism is appropriate: Morita equivalence. Recall that two such algebras~$A$ and~$B$ are called {\it Morita equi\-valent} if their categories of modules (or representations) are~$K$-equivalent. While this description makes it clear that we have defined an equivalence relation, Morita theory actually shows that there is a convenient explicit description of the relation that does not involve categories: two~$K$-algebras~$A$ and~$B$ are Morita equivalent if and only if there are non-trivial, finite-dimensional~$K$-vector spaces~$V$ and~$W$ such that~\hbox{$A\otimes_K\End_K(V)$} and~\hbox{$B\otimes_K\End_K(W)$} are isomorphic as~$K$-algebras. Therefore, we may say that Morita equivalence is generated by simple extensions: those from~$A$ to~$A\otimes_K\End_K(V)$. There is an abelian monoid structure on the set of Morita equivalence classes of algebras: The sum of the classes of two~$K$-algebras~$A$ and~$B$ is the class of the tensor product~$A\otimes_KB$, and the class of the ground field~$K$ is the neutral element. Not all elements of this abelian monoid have an inverse, but those algebras~$A$ for which the natural map~\hbox{$A\otimes_KA^\circ\to\End_K(A)$} from the tensor product with the opposite algebra~$A^\circ$ is an isomorphism clearly do. These are precisely the central simple~$K$-algebras. The abelian group of invertible classes of algebras is the Brauer group~$\Br(K)$ of~$K$. These notions have been generalized from the context of fields~$K$ to local rings by Azumaya~\cite{Azumaya}, and further to arbitrary commutative rings~$R$ by Auslander and Goldman~\cite{Auslander+Goldman}.


Our first aim is to use some of the classical methods of algebraic K-theory, recalled in the following Section~\ref{sec:background}, in order to define spaces~$\Brspace(R)$ such that their groups of components are naturally isomorphic to the Brauer groups~$\Br(R)$ of the commutative rings~$R$:
\[
	\pi_0\Brspace(R)\cong\Br(R).
\] 
Of course, this property does not characterize these spaces, so that we will have to provide motivation why the choice given here is appropriate. Therefore, in Section~\ref{sec:Whitehead}, we review Waldhausen's work~\cite{Waldhausen:Top1} and~\cite{Waldhausen:LNM} on the Whitehead spaces in geometric topology in sufficient detail so that it will become clear how this inspired our definition of the Brauer spaces~$\Brspace(R)$ to be given in Section~\ref{sec:Brauer_for_commutative_rings} in the case of commutative rings~$R$. Thereby we achieve the first aim. 

We can then relate the Brauer spaces to the classifying spaces of the Picard groupoids, and prove that we have produced a natural delooping of these, see Theorem~\ref{thm:Pic_identification_rings}. In particular, the higher homotopy groups are described by natural isomorphisms
\[
	\pi_1\Brspace(R)\cong\Pic(R)
\] 
and
\[
	\pi_2\Brspace(R)\cong\GL_1(R).
\] 

As will be discussed in Section~\ref{sec:Duskin}, we obtain an arguably more conceptual result than earlier efforts of Duskin~\cite{Duskin} and Street~\cite{Street}. This becomes particularly evident when we discuss comparison maps later. Another bonus of the present approach is the fact that it naturally produces infinite loop space structures on the Brauer spaces, so that we even have Brauer spectra~$\br(R)$ such that~\hbox{$\Omega^\infty\br(R)\simeq\Brspace(R)$} for all commutative rings~$R$.


The following Section~\ref{sec:Brauer_for_commutative_S-algebras} introduces Brauer spaces and spectra in the context of structured ring spectra. The approach presented here is based on and inspired by the same classical algebraic K-theory machinery that we will already have used in the case of commutative rings. These spaces and spectra refine the Brauer groups for commutative~$\Sbb$-algebras that have been defined in collaboration with Baker and Richter, see~\cite{Baker+Richter+Szymik}. Several groups of people are now working on Brauer groups of this type. For example, Gepner and Lawson are studying the situation of Galois extensions using methods from Lurie's higher topos theory~\cite{Lurie:HTT} and higher algebra. For connective rings, these methods are used in~\cite{Antieau+Gepner} for constructions and computations similar to~(but independent of) ours in this case. For example, the Brauer group~$\Br(\Sbb)$ of there sphere spectrum itself is known to be trivial. In contrast, this is not the case for the~(non-connective) chromatic localizations of the sphere spectrum by~\cite[Theorem~10.1]{Baker+Richter+Szymik}, and Angeltveit, Hopkins, and Lurie are making further progress towards the computation of these chromatic Brauer groups. 


In the final Section~\ref{sec:relative}, we indicate ways of how to apply the present theory. We first discuss the functoriality of our construction and define relative invariants which domi\-nate the relative invariants introduced in~\cite{Baker+Richter+Szymik}, see Proposition~\ref{prop:relative_relation}. Then we turn to the relation between the Brauer spectra of commutative rings and those of structured ring spectra. The Eilenberg-Mac Lane functor~$\upH$ produces structured ring spectra from ordinary rings, and it induces a homomorphism~\hbox{$\Br(R)\to\Br(\upH R)$} between the corresponding Brauer groups, see~\cite[Proposition 5.2]{Baker+Richter+Szymik}. We provide for a map of spectra which induces the displayed homomorphism after passing to homotopy groups, see~Proposition~\ref{prop:EMmap_spectra}.

Another potential application for Brauer spectra is already hinted at in~\cite{Clausen}: they are the appropriate target spectra for an elliptic~J-homomorphism. Recall that the usual~J-homomorphism can be described as a map $\ko\to\pic(\Sbb)$ from the real connective K-theory spectrum to the Picard spectrum of the sphere by means of the algebraic K-theory of symmetric monoidal categories. An elliptic~J-homomorphism should be a map from the connective spectrum of topological modular forms (or a discrete model thereof) to the Brauer spectrum of the sphere, where~$\br(\Sbb)\simeq\Sigma\pic(\Sbb)$ since~$\Br(\Sbb)=0$. It now seems highly plausible that such a map can be constructed from the algebraic K-theory of $\ko$, which has at least the correct chromatic complexity by results of Ausoni and Rognes~\cite{Ausoni+Rognes}. This will be pursued elsewhere.

\subsection*{Acknowledgment}

This work has been partially supported by the Deutsche Forschungsgemeinschaft~(DFG) through the Sonderforschungsbereich~(SFB)~701 ``Spektrale Strukturen und Topologische Methoden in der Mathe\-matik'' at Bielefeld University, and by the Danish National Research Foundation through the Centre for Symmetry and Deformation at the University of Copenhagen~(DNRF92). I would like to thank the referee of an earlier version for the detailed report.


\section{Background on algebraic K-theory}\label{sec:background}

After Quillen, algebraic K-theory is ultimately built on the passage from categorical input to topological output. Various different but equivalent methods to achieve this can be found in the literature. We will need to recall one such construction here, one that produces spectra from symmetric monoidal categories. This is originally due to Thomason~\cite{Thomason} and Shimada-Shimakawa~\cite{Shimada+Shimakawa}, building on earlier work of Segal~\cite{Segal} in the case where the monoidal structure is the categorical sum, and May~(\cite{May:perm1} and~\cite{May:perm2}) in the case when the category is permutative. See also~\cite{Thomason:Aarhus} and the appendix to~\cite{Thomason:CommAlg}. Here we will follow some of the more contemporary expositions such as the ones given in~\cite{Carlsson},~\cite{Elmendorf+Mandell},~\cite{Mandell} and~\cite{Dundas+Goodwillie+McCarthy}, for example. The reader familiar with Segal's~$\Gamma$-machine can safely skip this section and refer back to it for notation only.


\subsection*{$\Gamma$-spaces}

One may say that the key idea behind Segal's machine is the insight that the combinatorics of abelian multiplications is governed by the category of finite pointed sets and pointed maps between them. A~{\it~$\Gamma$-space} is simply a pointed functor~$G$ from this category to the category of pointed spaces.

For set-theoretic purposes, it is preferable to work with a skeleton of the source category: For any integer~$n\geqslant0$, let~$n_+$ denote the pointed set~$(\{0,1,\dots,n\},0)$. We note that there is a canonical pointed bijection~\hbox{$1_+=\upS^0$} with the~$0$-sphere. The full subcategory category~$\Gamma^\circ$ has objects~$n_+$, for~$n\geqslant0$. These span a skeleton of the category of finite pointed sets that is isomorphic to the opposite of Segal's category~$\Gamma$. This explains the odd notation.

A~$\Gamma$-space~$G$ is called {\it special} if the Segal maps
\[
	G(N,n)\longrightarrow\prod_{N\setminus\{n\}} G(\upS^0)
\]
which are induced by the maps with singleton support, are weak equivalences. If this is the case, then there is an induced abelian monoid structure on the set~$\pi_0(G(\upS^0))$ of components of~$G(\upS^0)$, and the~$\Gamma$-space~$G$ is called {\it very special} if, in addition, this abelian monoid is a group.

For the purpose of exposition, let us note that basic examples of very special~$\Gamma$-spaces are given by abelian groups~$A$: We can associate to the finite pointed set~$n_+$ the space~$A^n$, and it should be thought of as the space of pointed maps~$f\colon n_+\to A$, where~$A$ is pointed by the zero element. For each pointed map~\hbox{$\phi\colon m_+\to n_+$} the transfer formula
\[
	(\phi_*f)(j)=\sum_{\phi(i)=j}f(i)
\]
induces a pointed map~\hbox{$\phi_*\colon A^m\to A^n$}.


\subsection*{Spectra from~$\Gamma$-spaces}

Every~$\Gamma$-space~$G$ extends to a functor from spectra to spectra. In particular, every~$\Gamma$-space~$G$ has an associated spectrum~$G(\Sbb)$ by evaluation of this extension on the sphere spectrum~$\Sbb$, and hence it also has an associated (infinite loop) space~$\Omega^\infty G(\Sbb)$. For later reference, we need to recall some of the details from~\cite[Section~4]{Bousfield+Friedlander}. First of all, the functor~$G$ is extended to a functor from (all) pointed sets to pointed spaces by left Kan extension. Then it induces a functor from pointed simplicial sets to pointed simplicial spaces, and these can then be realized as spaces again. Finally, the~$n$-th space of~$G(\Sbb)$ is~$G(\upS^n)$, where~$\upS^n$ is the simplicial~$n$-sphere: the simplicial circle~$\upS^1=\Delta^1/\partial\Delta^1$ is a simplical set that has precisely~$n+1$ simplices of dimension~$n$, and~$\upS^n=\upS^1\wedge\dots\wedge\upS^1$ is the smash product of~$n$ simplicial circles.

We state the following fundamental theorem for later reference.

\begin{theorem}\label{thm:group_completion}
	{\bf\upshape(\cite[1.4]{Segal}, \cite[4.2,~4.4]{Bousfield+Friedlander})}
	If~$G$ is a special~$\Gamma$-space, then the adjoint structure maps
$G(\upS^n)\to\Omega G(\upS^{n+1})$ are equivalences for~$n\geqslant1$, and if~$G$ is very special, then this also holds for~$n=0$, so that under this extra hypothesis~$G(\Sbb)$ is an~$\Omega$-spectrum.
\end{theorem}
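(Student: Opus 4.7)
The plan is to identify $G(\upS^{n+1})$ with the classifying space (bar construction) $\upB G(\upS^n)$ of the H-space $G(\upS^n)$, and then to appeal to the classical group completion theorem, which says that the canonical map $Y \to \Omega \upB Y$ is a weak equivalence whenever $Y$ is a grouplike H-space.

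First, I would exploit the description of the simplicial circle. Since $\upS^1 = \Delta^1/\partial\Delta^1$ has exactly $k$ non-basepoint simplices in dimension $k$, the pointed simplicial set $\upS^1$ is, level-wise, the standard bar simplicial model. Applying $G$ (as extended to pointed simplicial spaces via left Kan extension and realization) to the smash $\upS^1 \wedge \upS^n$, I would present $G(\upS^{n+1})$ as the realization of a simplicial pointed space whose $k$-th level is weakly equivalent to $G(k_+ \wedge \upS^n)$. The specialness of $G$ provides Segal equivalences
\[
    G(k_+ \wedge \upS^n) \xrightarrow{\,\simeq\,} G(\upS^n)^k,
\]
and together with the description of the face and degeneracy maps inherited from $\upS^1$, these identify the simplicial space with the bar simplicial space $\upB_{\bullet} G(\upS^n)$ of the H-space $G(\upS^n)$. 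Realizing, this yields a weak equivalence $G(\upS^{n+1}) \simeq \upB G(\upS^n)$.

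Second, I would verify the grouplike hypothesis of the group completion theorem. For $n \geq 1$, iterating the bar-construction identification shows that $G(\upS^n) \simeq \upB G(\upS^{n-1})$ is path-connected, since its zero-simplices reduce to $G(0_+) = *$, so $\pi_0 G(\upS^n) = 0$ is trivially a group and group completion delivers $G(\upS^n) \xrightarrow{\,\simeq\,} \Omega \upB G(\upS^n) \simeq \Omega G(\upS^{n+1})$, as desired. For $n = 0$, no such automatic connectivity is available, but the very special hypothesis ensures that the abelian monoid $\pi_0 G(\upS^0)$ is a group, which is exactly what group completion demands.

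The main obstacle is upgrading the level-wise Segal equivalences to an honest weak equivalence after geometric realization. For this one must verify that the bisimplicial space at hand is sufficiently well-behaved, for example that it satisfies the $\pi_*$-Kan or properness condition needed to invoke the Bousfield-Friedlander realization theorem, so that homotopy invariance passes from the levels to the realization. This is precisely the technical refinement that Bousfield and Friedlander add to Segal's original argument, and it is what ensures that the resulting spectrum $G(\Sbb)$ is an $\Omega$-spectrum beyond the first level (and, under the very special hypothesis, starting from level zero).
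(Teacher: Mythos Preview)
The paper does not give its own proof of this theorem; it only states it and attributes it to Segal and Bousfield--Friedlander. Your sketch is a faithful outline of exactly those arguments: the identification of $G(\upS^{n+1})$ with the bar construction on $G(\upS^n)$ via the simplicial model of $\upS^1$, the observation that $G(\upS^n)$ is connected for $n\geqslant 1$ because its $0$-simplices are $G(0_+)=*$, and the invocation of the grouplike hypothesis at level~$0$ under the very special assumption. You also correctly flag the one genuine technical issue, namely that passing the level-wise Segal equivalences through realization requires a properness/$\pi_*$-Kan condition on the bisimplicial object, which is precisely Bousfield--Friedlander's contribution. Two minor remarks: what you call the ``classical group completion theorem'' is really the more elementary statement that a grouplike $A_\infty$-space is equivalent to the loops on its bar construction (the name ``group completion theorem'' usually refers to the homological statement of McDuff--Segal or Quillen); and your connectivity argument for $G(\upS^n)$ is cleaner stated directly from the $0$-simplices of $\upS^n$ rather than via the bar identification you are in the process of establishing.
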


Thus, if~$G$ is a special~$\Gamma$-space, then the associated infinite loop space
is canonically identified with~$\Omega G(\upS^1)$, and if~$G$ is a very special~$\Gamma$-space, then the associated infinite loop space is canonically identified with~$G(\upS^0)$ as well.


\subsection*{$\Gamma$-spaces from~$\Gamma$-categories}

In all the preceding definitions, spaces may be replaced by categories: A {\it~$\Gamma$-category} is a pointed functor~$\calG$ from the category of finite pointed sets to the category of pointed~(small) categories. A~$\Gamma$-category~$\calG$ is called {\it special} if the Segal functors are equivalences. If this is the case, then this induces an abelian monoid structure on the set of isomorphism classes of objects of~$\calG(\upS^0)$, and the~$\Gamma$-category~$\calG$ is called {\it very special} if, in addition, this abelian monoid is a group.

A~$\Gamma$-category~$\calG(-)$ defines a~$\Gamma$-space~$|\calG(-)|$ by composition with the geometric realization functor from~(small) categories to spaces.


\subsection*{$\Gamma$-categories from symmetric monoidal categories}

Recall that the data required for a {\it monoidal category} are a category~$\calV$ together with a functor~\hbox{$\Box=\Box_\calV\colon\calV\times\calV\to\calV$}, the {\it product}, and an object~\hbox{$e=e_\calV$}, the {\it unit}, such that the product is associative and unital up to natural isomorphisms which are also part of the data. If, in addition, the product is commutative up to natural isomorphisms, then a choice of these turns~$\calV$ into a {\it symmetric monoidal category}.

Every symmetric monoidal category~$\calV$ gives rise to a~$\Gamma$-category~$\calV(-)$ as follows. Since the functor~$\calV(-)$ has to be pointed, we can assume that we have a pointed set~$(N,n)$ such that~$N\setminus\{n\}$ is not empty. Then, the objects of~$\calV(N,n)$ are the pairs~$(V,p)$, where~$V$ associates an object~$V(I)$ of~$\calV$ to every subset~$I$ of~$N\setminus\{n\}$, and~$p$ associates a map
\[
	p(I,J)\colon V(I\cup J)\longrightarrow V(I)\Box_\calV V(J)
\]
in~$\calV$ to every pair~$(I,J)$ of disjoint subsets~$I$ and~$J$ of~$N\setminus\{n\}$, in such way that four conditions are satisfied: the~$(V,p)$ have to be pointed, unital, associative, and symmetric. We refer to the cited literature for details.

\begin{examples}\label{ex:gamma012}
	If~$N=0_+$, then the category~$\calV(0_+)$ is necessarily trivial.
	If~$N=1_+$, then the category~$\calV(1_+)=\calV(\upS^0)$ is equivalent to original category~$\calV$ via the functor that evaluates at the non-base-point. If~$N=2_+$, then the category~$\calV(2_+)$ is equivalent to the category of triples $(V_{12},V_1,V_2)$ of objects together with morphisms $V_{12}\to V_1\Box_\calV V_2$.
\end{examples}

Let us now describe the functoriality of the categories~$\calV(N,n)$ in~$(N,n)$. If we are given a pointed map~$\alpha\colon(M,m)\to (N,n)$ of finite pointed sets, then a functor
\[
	\alpha_*=\calV(\alpha)\colon\calV(M,m)\longrightarrow\calV(N,n)
\]
is defined on the objects of~$\calV(M,m)$ by~$\alpha_*(V,p)=(\alpha_*V,\alpha_*p)$, where the components are defined by~\hbox{$(\alpha_*V)(I)=V(\alpha^{-1}I)$} and~\hbox{$(\alpha_*p)(I,J)=p(\alpha^{-1}I,\alpha^{-1}J)$}, and similarly on the morphisms of~$\calV(M,m)$. It is then readily checked that~$\alpha_*$ is a functor, and that the equations~\hbox{$\id_*=\id$} and~$(\alpha\beta)_*=\alpha_*\beta_*$ hold, so that we indeed have a~$\Gamma$-category~$\calV(-)\colon(N,n)\mapsto\calV(N,n)$.


\subsection*{Algebraic K-theory}

If~$\calV$ is a symmetric monoidal category, then its {\it algebraic K-theory spectrum}~$\bfk(\calV)$ will be the spectrum associated with the~$\Gamma$-category that it determines. Its~$n$-th space is
\[
	\bfk(\calV)_n=|\calV(\upS^n)|.
\]
The {\it algebraic K-theory space}~$\bfK(\calV)=\Omega^\infty\bfk(\calV)$ of~$\calV$ is the underlying (infinite loop) space. By Theorem~\ref{thm:group_completion}, there is always a canonical equivalence
\[
	\bfK(\calV)\simeq\Omega|\calV(\upS^1)|.
\]
In addition, there is also a canonical equivalence
\[
	\bfK(\calV)\simeq|\calV(\upS^0)|\simeq|\calV|
\]
in the cases when the abelian monoid of isomorphism classes of~$\calV$ is a group under~$\Box_\calV$. This condition will be met in all the examples in this paper.


\subsection*{Functoriality}

We finally need to comment on the functoriality of this algebraic K-theory construction. We would like maps~$\bfk(\calV)\to\bfk(\calW)$ of spectra to be induced by certain functors~\hbox{$\calV\to\calW$}.
It is clear that this works straightforwardly for strict functors between symmetric monoidal categories. But, it is useful to observe that it suffices, for example, to have an {\it op-lax symmetric monoidal functor} that is {\it strictly unital}: this is a functor~$F\colon\calV\to\calW$ such that~$F(e_\calV)=e_\calW$ holds, together with a natural transformation
\[
	\Phi\colon F(V\Box_\calV V')\longrightarrow F(V)\Box_\calW F(V')
\]
that commutes with the chosen associativity, unitality, and commutativity isomorphisms. Given such a strictly unital op-lax symmetric monoidal functor~$\calV\to\calW$, there is still an induced~$\Gamma$-functor~$\calV(-)\to\calW(-)$ between the associated~$\Gamma$-categories: It is defined on objects by the formula~$F_*(V,p)=(F(V),\Phi\circ F(p))$ that makes it clear how~$\Phi$ is used.


\section{Whitehead groups and Whithead spaces}\label{sec:Whitehead}

In this section, we will review just enough of Waldhausen's work on Whitehead spaces so that it will become clear how it inspired the definition of Brauer spaces to be given in the following Section~\ref{sec:Brauer_for_commutative_rings}.

A geometric definition of the Whitehead group of a space has been suggested by many people, see \cite{Stocker}, \cite{Eckmann+Maumary}, \cite{Siebenmann}, \cite{Farrell+Wagoner}, and \cite{Cohen}. We will review the basic ideas now. Ideally, the space~$X$ will be a nice topological space which has a universal covering, but it could also be a simplicial set if the reader prefers so. One considers finite cell extensions~(cofibrations)~$X\to Y$ up to homeomorphism under~$X$. An equivalence relation coarser than homeomorphism is generated by the so-called elementary extensions~\hbox{$Y\to Y'$}, or their inverses, the elementary collapses. By~\cite{Cohen:article}, this is the same as the equivalence relation generated by the simple maps. (Recall that a map of simplicial sets is {\it simple} if its geometric realization has contractible point inverses, see~\cite{Waldhausen+Jahren+Rognes}.) The sum of two extensions~$Y$ and~$Y'$ is obtained by glueing~$Y\cup_X Y'$ along~$X$, and~$X$ itself is the neutral element, up to homeomorphism. Not all elements have an inverse here, but those~$Y$ for which the structure map~$X\to Y$ is invertible (a homotopy equivalence) do. The abelian group of invertible extensions is called the {\it Whitehead group}~$\Wh(X)$ of~$X$. 

The preceding description of the Whitehead group, which exactly parallels the description of the Brauer group given in the introduction, makes it clear that these are very similar constructions. 


The Whitehead group~$\Wh(X)$ of a space~$X$, as described above, is actually a homotopy group of the Whitehead space. Let us recall from~\cite[Section 3.1]{Waldhausen:LNM} how this space can be constructed. We denote by~$\calC_X$ the category of the cofibrations under~$X$; the objects are the cofibrations~$X\to Y$ as above, and the morphisms from~$Y$ to~$Y'$ are the maps under~$X$. The superscript~$f$ will denote the subcategory of finite objects, where~$Y$ is generated by the image of~$X$ and finitely many cells. The superscript~$h$ will denote the subcategory of the invertible objects, where the structure map is an equivalence. The prefix~$s$ will denote the subcategory of simple maps. Then there is a natural bijection
\begin{equation}\label{eq:Wh_is_a_group}
	\Wh(X)\cong\pi_0|s\calC^{fh}_X|,
\end{equation}
see~\cite[3.2]{Waldhausen+Jahren+Rognes}. 

The bijection~\eqref{eq:Wh_is_a_group} is an isomorphism of groups if one takes into account the fact that the category~$\calC_X$ is symmetric monoidal: it has (finite) sums. This leads to a delooping of the space~$|s\calC^{fh}_X|$. Because the abelian monoid~$\pi_0|s\calC^{fh}_X|$ is already a group by~\eqref{eq:Wh_is_a_group}, Waldhausen deduces that there is a natural homotopy equivalence
 \begin{equation*}
	 |s\calC^{fh}_X|\simeq|s\calC^{fh}_X(\upS^0)|\simeq\Omega|s\calC^{fh}_X(\upS^1)|,
\end{equation*}
see~\cite[Proposition 3.1.1]{Waldhausen:LNM}, and he calls~$|s\calC^{fh}_X(\upS^1)|$ the {\it Whitehead space} of~$X$. Thus, the Whitehead space of~$X$ is a path connected space, whose fundamental group is isomorphic to the Whitehead group~$\Wh(X)$ of~$X$. 

\begin{remark}
One may find the convention of naming a space after its fundamental group rather odd, but the terminology is standard in geometric topology.
\end{remark}

Because the category~$s\calC^{fh}_X$ is symmetric monoidal, the algebraic K-theory machine that we have reviewed in Section~\ref{sec:background} can be used to produce a spectrum, the {\it Whitehead spectrum} of~$X$, such that the Whitehead space is its underlying infinite loop space.

Since it has proven to be very useful in geometric topology to have Whitehead spaces and spectra rather than just Whitehead groups, will we now use the analogy presented in this Section in order to define Brauer spaces and spectra as homotopy refinements of Brauer groups.


\section{Brauer spectra for commutative rings}\label{sec:Brauer_for_commutative_rings}

In this section, we will complete the analogy between Brauer groups and Whitehead groups by defining Brauer spaces and spectra in nearly the same way as we have described the Whitehead spaces and spectra in the previous section. Throughout this section, the letter~$R$ will denote an ordinary commutative ring, and~\cite[Chapter II]{Bass} will be our standard reference for the facts used from Morita theory. See also~\cite{Bass+Roy}.


\subsection*{The categories~$\calA_R$}

Let~$R$ be a commutative ring. Given such an~$R$, we will now define a category~$\calA_R$. The objects are the associative~$R$-algebras~$A$. It might be useful to think of the associative~$R$-algebra~$A$ as a placeholder for the~$R$-linear category~$\calM_A$ of right~$A$-modules. The morphisms~$A\to B$ in~$\calA_R$ will be the~$R$-linear functors~\hbox{$\calM_A\to\calM_B$}. Composition in~$\calA_R$ is composition of functors and identities are the identity functors, so that it is evident that~$\calA_R$ is a category. In fact, the category~$\calA_R$ is naturally enriched in spaces: The~$n$-simplices in the space of morphisms~\hbox{$A\to B$} are the functors~$\calM_A\times[n]\to\calM_B$. Here~$[n]$ denotes the usual poset category with object set~$\{0,\dots,n\}$ and standard order.

\begin{remark}\label{rem:higher_categories}
	It seems tempting to work in a setting where the morphism are given by bimodules instead of functors, as in~\cite[XII.7]{MacLane}. But, composition and identities are then given only up to choices, and this approach does not define a category in the usual sense. Compare Remark~\ref{rem:Street}.
\end{remark}


\subsection*{Decorated variants}

There are full subcategories~$\calA^f_R$ and~$\calA^h_R$ of~$\calA_R$, defined as follows. Recall the following characterization of faithful modules from~\cite[IX.4.6]{Bass}.

\begin{proposition}\label{prop:bass}
	For finitely generated projective~$R$-modules~$P$, the following are equivalent.\\
	{\upshape(1)} The~$R$-module~$P$ is a generator of the category of~$R$-modules.\\
	{\upshape(2)} The rank function~$\mathrm{Spec}(R)\to\ZZ$ of~$P$ is everywhere positive.\\
	{\upshape(3)} There is a finitely generated projective~$R$-module~$Q$ such that~\hbox{$P\otimes_RQ\cong R^{\oplus n}$} for some positive integer~$n$.
\end{proposition}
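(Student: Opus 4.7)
The proposition collects three classical equivalent characterizations of a faithful finitely generated projective module (a \emph{progenerator}) over the commutative ring~$R$. My plan is to prove the cyclic implications~$(3)\Rightarrow(1)\Rightarrow(2)\Rightarrow(3)$, each via standard ingredients from Morita theory and localization.

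For~$(3)\Rightarrow(1)$: since~$Q$ is finitely generated projective, it is a direct summand of some~$R^{\oplus m}$, so~$P\otimes_R Q$ is a direct summand of~$P\otimes_R R^{\oplus m}\cong P^{\oplus m}$. The hypothesis then exhibits~$R^{\oplus n}$, and in particular~$R$, as a direct summand of~$P^{\oplus m}$. A projection~$P^{\oplus m}\to R$ provides homomorphisms~$\phi_i\colon P\to R$ and elements~$p_i\in P$ with~$\sum_i\phi_i(p_i)=1$, so the trace ideal $\sum_\phi\phi(P)\subseteq R$ equals~$R$; this is precisely the condition for~$P$ to generate the category of $R$-modules.

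For~$(1)\Rightarrow(2)$: I would localize at each prime~$\mathfrak{p}$ of~$R$. The trace ideal commutes with localization for finitely generated~$P$, so~$P_\mathfrak{p}$ is a generator of $R_\mathfrak{p}$-modules. Since~$R_\mathfrak{p}$ is local, the finitely generated projective module~$P_\mathfrak{p}$ is free, and a generator is nonzero, so the rank at~$\mathfrak{p}$ is positive.

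The main obstacle is~$(2)\Rightarrow(3)$. As above, condition~(2) forces the trace ideal of~$P$ to equal~$R$, whence~$R$ is a direct summand of some~$P^{\oplus N}$. To produce the required partner~$Q$, the naive choice $Q=\mathrm{Hom}_R(P,R)$ only gives $P\otimes_R Q\cong\End_R(P)$, which is locally free of rank~$r^2$ but in general not globally free; the residual obstruction lives in~$K_0(R)$. I would follow Bass~\cite[IX.4.6]{Bass}, combining this with the Morita-theoretic identity $P\otimes_{\End_R(P)}\mathrm{Hom}_R(P,R)\cong R$ and with the projectivity of~$P$ to construct~$Q$ inductively, allowing~$n$ to grow as needed so that the obstruction is absorbed into~$Q$.
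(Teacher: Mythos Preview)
The paper does not prove this proposition at all; it is quoted as a known result from Bass~\cite[IX.4.6]{Bass}, so there is no in-paper argument to compare against. Your arguments for $(3)\Rightarrow(1)$ and $(1)\Rightarrow(2)$ are correct and standard, and already go beyond what the paper offers.

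The implication $(2)\Rightarrow(3)$ is where the content lies, and here your sketch does not close the gap. Two concrete problems. First, the Morita identity $P\otimes_{\End_R(P)}\mathrm{Hom}_R(P,R)\cong R$ is a tensor product over~$\End_R(P)$, not over~$R$; over~$R$ one only has $P\otimes_R P^{*}\cong\End_R(P)$, which, as you yourself observe, need not be free. Second, the ``inductive absorption'' you gesture at does not terminate: from $(P\otimes_R P^{*})\oplus A'\cong R^{n_1}$ and the fact that the generator~$P$ makes~$A'$ a summand of some~$P^{m_1}$, one gets $P\otimes_R(P^{*}\oplus R^{m_1})\cong R^{n_1}\oplus B_1$ with a \emph{new} projective~$B_1$ on the right, and each further step produces another such term. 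One line that does work is to reduce to a finitely generated (hence Noetherian, finite-dimensional) subring over which~$P$ is defined, use the $\lambda$-operations on~$K_0(R)$ to see that $[P]-r$ is nilpotent and hence $[P]\cdot\beta=r^{k+1}$ for some~$\beta$ represented by an honest projective~$Q$, and then upgrade the resulting \emph{stably} free $P\otimes_R Q$ to a free module via Bass cancellation once the rank exceeds the Krull dimension. That is substantially more than your outline supplies; if you are content simply to cite~\cite[IX.4.6]{Bass}, that is exactly what the paper does.
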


The full subcategory~$\calA^f_R$ consists of those~$R$-algebras~$A$ which, when considered as an~$R$-module, are finitely generated projective, and faithful in the sense of Proposition~\ref{prop:bass}. 

An~$R$-algebra~$A$ is in the full subcategory~$\calA^h_R$ if and only if the natural map
\begin{equation*}
	A\otimes_RA^\circ\longrightarrow\End_R(A)
\end{equation*}
is an isomorphism. We are mostly interested in the intersection
\begin{equation*}
	\calA^{fh}_R=\calA^f_R\cap\calA^h_R.
\end{equation*}

\begin{remark}\label{rem:Azumaya}
	An~$R$-algebra~$A$ lies in the full subcategory~$\calA^{fh}_R$ if and only if it is an Azumaya~$R$-algebra in the sense of~\cite{Auslander+Goldman}. 
\end{remark}

While~$f$ and~$h$ refer to restrictions on the objects, and therefore define full subcategories, the prefix~$s$ will indicate that we are considering less morphisms: Morphisms~\hbox{$A\to B$} in~$s\calA^{fh}_R$ are those functors~$\calM_A\to\calM_B$ which are~$R$-linear equivalences of categories: Morita equivalences.  For the higher simplices in~$s\calA^{fh}_R$, we require that they codify natural isomorphisms rather than all natural transformations. This implies that the mapping spaces are nerves of groupoids. In particular, they satisfy the Kan condition.

\begin{remark}
By Morita theory, the~$R$-linear equivalences~$\calM_A\to\calM_B$ of categories are, up to natural isomorphism, all of the form~\hbox{$X\longmapsto X\otimes_AM$} for some invertible~$R$-symmetric~$(A,B)$-bimodule~$M$. And conversely, all such bimodules define equivalences.
\end{remark}


\subsection*{A symmetric monoidal structure}

A symmetric monoidal structure on~$\calA_R$ and its decorated subcategories is induced by the tensor product
\begin{equation*}
	(A,B)\mapsto A\otimes_RB
\end{equation*}
of~$R$-algebras. The neutral object is~$R$. We note that the tensor product is not the categorical sum in~$\calA_R$, because the morphisms in that category are not just the algebra maps.

\begin{proposition}\label{prop:pi0_is_a_group_1}
	With the induced multiplication, the abelian monoid~$\pi_0|s\calA^{fh}_R|$ of isomorphism classes of objects is an abelian group.
\end{proposition}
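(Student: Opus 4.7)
The plan is to exhibit the opposite algebra $A^\circ$ as an inverse for any class $[A] \in \pi_0|s\calA^{fh}_R|$. Unwinding the construction, the space $|s\calA^{fh}_R|$ is the nerve of a category whose morphisms are $R$-linear equivalences of module categories, so $\pi_0$ of this nerve is the set of Morita equivalence classes of Azumaya $R$-algebras (by Remark~\ref{rem:Azumaya}). The tensor product descends to this quotient, giving the monoid operation $[A]\cdot[B] = [A \otimes_R B]$ with unit $[R]$, because a pair of invertible bimodules implementing Morita equivalences $A \simeq A'$ and $B \simeq B'$ can be tensored over $R$ to implement a Morita equivalence $A \otimes_R B \simeq A' \otimes_R B'$.

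Given $[A]$, the first step is to observe that $A^\circ$ again lies in $\calA^{fh}_R$: the underlying $R$-module is unchanged, so finite projectivity and faithfulness persist, and the Azumaya condition is symmetric under $A \leftrightarrow A^\circ$ (explicitly, the flip isomorphism $A \otimes_R A^\circ \cong A^\circ \otimes_R A$ of $R$-algebras combined with $\End_R(A^\circ) \cong \End_R(A)^\circ$ converts the Azumaya isomorphism for $A$ into one for $A^\circ$). By the defining property of $\calA^h_R$ there is an $R$-algebra isomorphism
\[
A \otimes_R A^\circ \;\cong\; \End_R(A),
\]
so the task reduces to showing $[\End_R(A)] = [R]$ in $\pi_0|s\calA^{fh}_R|$.

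This last step is where I would invoke Proposition~\ref{prop:bass}: the hypothesis $A \in \calA^f_R$ says exactly that $A$, regarded as an $R$-module, is a finitely generated projective faithful module, hence a progenerator by the equivalence (1)$\Leftrightarrow$(3) of the proposition. Classical Morita theory then produces an $R$-linear equivalence $\calM_R \simeq \calM_{\End_R(A)}$ via $M \mapsto M \otimes_R A$, and this is by definition a morphism $R \to \End_R(A)$ in $s\calA^{fh}_R$. Stitching everything together yields $[A]\cdot[A^\circ] = [A \otimes_R A^\circ] = [\End_R(A)] = [R]$, so the monoid is a group.

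I do not expect any serious obstacle: the input is standard Morita theory for Azumaya algebras, and the only mildly delicate point is to track that the Morita equivalences supplied by $A$ and by an invertible bimodule genuinely qualify as morphisms in the category $s\calA^{fh}_R$ as it has been set up, but this has been built into the definition from the outset.
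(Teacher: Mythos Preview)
Your argument is correct and matches the paper's proof essentially step for step: use the Azumaya isomorphism $A\otimes_R A^\circ\cong\End_R(A)$, then invoke Proposition~\ref{prop:bass} to see that $A$ is a progenerator and hence that $\End_R(A)$ is Morita equivalent to $R$, giving a $1$-simplex in $s\calA^{fh}_R$ from $\End_R(A)$ to $R$. Your additional verification that $A^\circ$ again lies in $\calA^{fh}_R$ is a detail the paper leaves implicit.
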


\begin{proof}
	The elements of the monoid~$\pi_0|s\calA^{fh}_R|$ are represented by the objects of the category~$s\calA^{fh}_R$, and we have already noted that these are just the Azumaya algebras in the sense of~\cite{Auslander+Goldman}, see~Remark~\ref{rem:Azumaya}. Because each Azumaya algebra~$A$ satisfies~$A\otimes_RA^\circ\cong\End_R(A)$, we have~$[A]+[A^\circ]=[\End_R(A)]$ in~$\pi_0|s\calA^{fh}_R|$, so that~$[A^\circ]$ is an inverse to~$[A]$ in~$\pi_0|s\calA^{fh}_R|$ if there is a path from~$\End_R(A)$ to~$R$ in the category~$|s\calA^{fh}_R|$. But, by Proposition~\ref{prop:bass}, we know that~$A$ is a finitely generated projective generator in the category of~$R$-modules, so that the~$R$-algebras~$\End_R(A)$ and~$R$ are Morita equivalent. This means that there exists an~$R$-linear equivalence, and this gives rise to a~$1$-simplex which connects the two vertices. This shows that  the monoid~$\pi_0|s\calA^{fh}_R|$ is in fact a group.
\end{proof}

\begin{proposition}\label{prop:pi0_is_Brauer_1}
	The group~$\pi_0|s\calA^{fh}_R|$ is naturally isomorphic to the Brauer group~$\Br(R)$ of the commutative ring~$R$ in the sense of~{\upshape\cite{Auslander+Goldman}}.
\end{proposition}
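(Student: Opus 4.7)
The plan is to show that both the underlying sets and the group operations match, using Morita theory as the main input, and that this identification is natural in~$R$.

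First, I would observe that by Remark~\ref{rem:Azumaya} the set of vertices of the simplicial set~$|s\calA^{fh}_R|$ coincides with the class of Azumaya~$R$-algebras. Next, I would analyse when two such vertices~$A$ and~$B$ are connected. By the definition of~$s\calA^{fh}_R$, a~$1$-simplex from~$A$ to~$B$ is an~$R$-linear equivalence of categories~$\calM_A\to\calM_B$, i.e.\ a Morita equivalence; higher simplices consist of natural isomorphisms between such equivalences. Therefore~$A$ and~$B$ lie in the same path component of~$|s\calA^{fh}_R|$ if and only if there is a zigzag of Morita equivalences between them, and since Morita equivalence is itself an equivalence relation, this is the same as~$A$ and~$B$ being Morita equivalent.

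Second, I would recall the classical fact (from~\cite[Chapter~II]{Bass}) that for Azumaya~$R$-algebras, Morita equivalence agrees with the Brauer equivalence used in~\cite{Auslander+Goldman}: two Azumaya algebras~$A$ and~$B$ define the same class in~$\Br(R)$ if and only if they are Morita equivalent. This is exactly the equivalence relation identified in the previous paragraph, so there is a bijection of sets
\[
	\pi_0|s\calA^{fh}_R|\longrightarrow\Br(R),\quad[A]\longmapsto[A].
\]

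Third, I would verify that this bijection is a homomorphism. The multiplication on~$\pi_0|s\calA^{fh}_R|$ is induced by the symmetric monoidal structure~$\otimes_R$ on~$\calA_R$, which is precisely the operation defining the Brauer group in the introduction; the unit~$R$ is sent to the trivial Brauer class. Combined with Proposition~\ref{prop:pi0_is_a_group_1}, this upgrades the bijection to an isomorphism of abelian groups.

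Finally, for naturality, a ring homomorphism~$R\to R'$ induces a strong symmetric monoidal base-change functor~$R'\otimes_R(-)\colon\calA^{fh}_R\to\calA^{fh}_{R'}$ that preserves Morita equivalences, hence passes to the subcategories~$s\calA^{fh}_R\to s\calA^{fh}_{R'}$ and induces the usual base-change map~$\Br(R)\to\Br(R')$ on~$\pi_0$. The only mildly delicate point is the verification that path components really coincide with Morita equivalence classes rather than some a~priori coarser relation generated by zigzags; but this is resolved by the transitivity of Morita equivalence, so no real obstacle arises and the proof reduces to unpacking the definitions.
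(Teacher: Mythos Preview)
Your proposal is correct and follows essentially the same route as the paper: both identify the vertices as Azumaya algebras, the monoidal structure as~$\otimes_R$, and reduce the statement to matching the Brauer and Morita equivalence relations. The only real difference is that you outsource the key step---that Brauer equivalence and Morita equivalence coincide for Azumaya~$R$-algebras---to~\cite[Chapter~II]{Bass}, whereas the paper argues it directly: simple extensions are Morita equivalences (from the proof of Proposition~\ref{prop:pi0_is_a_group_1}), and conversely any~$A$ Morita equivalent to~$R$ is of the form~$\End_R(P)$ for a finitely generated projective generator~$P$, hence a simple extension. Your explicit treatment of naturality via base change is a small addition the paper leaves implicit.
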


\begin{proof}
	The elements in both groups have the same representatives, namely the Azumaya algebras, and the multiplications and units are also agree on those. Thus, it suffices to show that the equivalence relations agree for both of them. The equivalence relation in the Brauer group is generated by the simple extensions, and the equivalence relation in~$\pi_0|s\calA^{fh}_R|$ is generated by Morita equivalence. 
	
	We have already seen in the preceding proof that simple extensions are Morita equivalent. Conversely, if an algebra~$A$ is Morita equivalent to~$R$, then~$A$ is isomorphic to~$\End_R(P)$ for some finitely generated projective generator~$P$ of the category of~$R$-modules, so that~$A$ is a simple extension of~$R$, up to isomorphism.
\end{proof}


\subsection*{Brauer spaces and spectra}

The following definition is suggested by Proposition~\ref{prop:pi0_is_Brauer_1}.

\begin{definition}
Let~$R$ be a commutative ring. The space
\begin{equation*}
	\Brspace(R)=|s\calA^{fh}_R|
\end{equation*}
is called the {\it Brauer space} of~$R$. 
\end{definition}

By Proposition~\ref{prop:pi0_is_Brauer_1}, there is an isomorphism
\begin{equation}\label{eq:pi_0_Br}
	\pi_0\Brspace(R)\cong\Br(R)
\end{equation}
that is natural in~$R$.

As described in Section~\ref{sec:background}, the symmetric monoidal structure on~$s\calA^{fh}_R$ also gives rise to an algebraic K-theory spectrum.

\begin{definition}
	Let~$R$ be a commutative ring. The spectrum
	\begin{equation*}
		\br(R)=\bfk(s\calA^{fh}_R)
	\end{equation*}
	is called the {\it Brauer spectrum} of~$R$. 
\end{definition}

We will now spell out the relation between the Brauer space~$\Brspace(R)$ and the Brauer spectrum~$\br(R)$ in detail.

\begin{proposition}\label{prop:delooping_rings}
	 There are natural homotopy equivalences
	 \begin{equation*}
		\Omega^\infty\br(R)
		\simeq\Omega|s\calA^{fh}_R(\upS^1)|
		\simeq|s\calA^{fh}_R(\upS^0)|
		\simeq|s\calA^{fh}_R|=\Brspace(R).
	\end{equation*}
\end{proposition}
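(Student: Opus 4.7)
The plan is to verify the hypotheses of Theorem~\ref{thm:group_completion} for the $\Gamma$-space $|s\calA^{fh}_R(-)|$ associated with the symmetric monoidal category $s\calA^{fh}_R$, and then read off the three equivalences directly from the theorem together with Example~\ref{ex:gamma012}.

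First I would confirm that the $\Gamma$-category $s\calA^{fh}_R(-)$ is special. This is essentially a formal consequence of the general construction, recalled in Section~\ref{sec:background}, which produces a $\Gamma$-category $\calV(-)$ out of any symmetric monoidal category $\calV$: by design, the Segal functors $\calV(N,n)\to\prod_{N\setminus\{n\}}\calV(\upS^0)$ are equivalences of categories, hence induce weak equivalences on nerves. The only thing one has to check in the case at hand is that the tensor product over $R$ does preserve the subcategory $\calA^{fh}_R$ of Azumaya algebras and the class of Morita equivalences, which is classical (see Remark~\ref{rem:Azumaya} and~\cite[Chapter II]{Bass}); the neutral element $R$ visibly lies in $\calA^{fh}_R$.

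Next I would upgrade specialness to very-specialness. By definition, this amounts to showing that the abelian monoid $\pi_0|s\calA^{fh}_R(\upS^0)|$ is a group. But Example~\ref{ex:gamma012} identifies $s\calA^{fh}_R(\upS^0)$ with $s\calA^{fh}_R$ up to equivalence of categories, so this monoid is naturally isomorphic to $\pi_0|s\calA^{fh}_R|$, and the latter is a group by Proposition~\ref{prop:pi0_is_a_group_1}.

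Applying Theorem~\ref{thm:group_completion} to the very special $\Gamma$-space $|s\calA^{fh}_R(-)|$ then supplies, by definition of $\bfk$ and the identification $\bfk(s\calA^{fh}_R)_n = |s\calA^{fh}_R(\upS^n)|$, the equivalences
\[
\Omega^\infty\br(R)\simeq\Omega|s\calA^{fh}_R(\upS^1)|\simeq|s\calA^{fh}_R(\upS^0)|,
\]
the first from the $n=1$ case of the structure maps (which only needs specialness) and the second from the $n=0$ case (which needs very-specialness). Finally, the equivalence $|s\calA^{fh}_R(\upS^0)|\simeq|s\calA^{fh}_R|$ is obtained by realizing the equivalence of categories $s\calA^{fh}_R(\upS^0)\simeq s\calA^{fh}_R$ given by evaluation at the non-basepoint, as recorded in Example~\ref{ex:gamma012}.

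No single step here is deep: the content sits entirely in the general machine of Section~\ref{sec:background} plus Proposition~\ref{prop:pi0_is_a_group_1}. The only point requiring any genuine verification is the compatibility of the decoration $fh$ with the symmetric monoidal structure, i.e.\ that $\calA^{fh}_R$ is closed under $\otimes_R$, which is the only place where I expect to invoke Morita theory rather than pure $\Gamma$-space formalism.
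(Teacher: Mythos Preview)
Your proposal is correct and follows essentially the same route as the paper's proof: the paper simply notes that the first and third equivalences hold for any symmetric monoidal category by the general machinery of Section~\ref{sec:background}, while the second requires the extra group-completeness input of Proposition~\ref{prop:pi0_is_a_group_1}. You have unpacked this in more detail (making explicit the special/very-special verification and the role of Example~\ref{ex:gamma012}), but the logical structure is identical.
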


\begin{proof}
As explained in Section~\ref{sec:background}, the first and third of these generally hold for symmetric monoidal categories, and the second uses the additional information provided by Proposition~\ref{prop:pi0_is_a_group_1}, namely that the spaces on the right hand side are already group complete.
\end{proof}

In particular, we also have natural isomorphisms
\[
	\pi_0\br(R)\cong\Br(R),
\]
of abelian groups, so that we can recover the Brauer group as the~$0$-th homotopy group of a spectrum. We will now determine the higher homotopy groups thereof.


\subsection*{Higher homotopy groups}

Let us now turn our attention to the higher homotopy groups of the Brauer space (or spectrum) of a commutative ring~$R$. 

Recall that an~$R$-module~$M$ is called {\it invertible} if there is another~$R$-module~$L$ and an isomorphism~$L\otimes_RM\cong R$ of~$R$-modules. (We remark that, later on, the ring~$R$ might be graded, and then we will also have occasion to consider graded~$R$-modules, but we will also explicitly say so when this will be the case.) The {\it Picard groupoid} of a commutative ring~$R$ is the groupoid of invertible~$R$-modules and their isomorphisms. The realization~$\Picspace(R)$ of the Picard groupoid can have only two non-trivial homotopy groups:
the group of components
\begin{equation}\label{eq:pi_0_Pic}
	\pi_0\Picspace(R)\cong\Pic(R)
\end{equation}
is the Picard group~$\Pic(R)$ of~$R$. The fundamental groups of the Picard space~$\Picspace(R)$ are all isomorphic to the group of automorphisms of the~$R$-module~$R$, which is the group~$\GL_1(R)$ of units in~$R$.
\begin{equation}\label{eq:pi_1_Pic}
	\pi_1(\Picspace(R),R)\cong\GL_1(R)
\end{equation}
See~\cite[Cor on~p.~3]{Weibel:Azumaya}, for example.

The multiplication on the set of components comes from the fact that the Picard groupoid is symmetric monoidal with respect to the tensor product~$\otimes_R$. Since the isomorphism classes of objects form a group by~\ref{eq:pi_0_Pic}, this also implies that there is Picard spectrum~$\pic(R)$ such that~$\Picspace(R)\simeq\Omega^\infty\pic(R)$, and there is a connected delooping
\[
	\upB\Picspace(R)\simeq\Omega^\infty\Sigma\pic(R)
\]
such that its homotopy groups are isomorphic to those of~$\Picspace(R)$, but shifted up by one. 

\begin{theorem}\label{thm:Pic_identification_rings}
	The components of~$\Brspace(R)$ are all equivalent to~$\upB\Picspace(R)$.
\end{theorem}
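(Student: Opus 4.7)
The plan is to reduce to the identification of a single component, and then to carry out that identification using Morita theory.

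By Proposition \ref{prop:delooping_rings}, $\Brspace(R)$ is the underlying infinite loop space of the spectrum $\br(R)$, so it is a grouplike $E_\infty$-space. Translation by an object representing the inverse of $[A]\in\pi_0\Brspace(R)=\Br(R)$ is therefore a self-homotopy-equivalence that moves the component of $A$ onto the component of the basepoint $R$. Consequently every component of $\Brspace(R)$ is homotopy equivalent to the component $\Brspace(R)_R$ of $R$, and it suffices to exhibit a natural equivalence $\Brspace(R)_R \simeq \upB\Picspace(R)$.

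The component $\Brspace(R)_R$ is the realization of the full subcategory of $s\calA^{fh}_R$ spanned by those Azumaya algebras that are Morita equivalent to $R$. By definition of the morphisms in $s\calA^{fh}_R$, every such algebra is already isomorphic to $R$ in $s\calA^{fh}_R$, so the inclusion of the one-object full subcategory on $R$ is an equivalence of simplicially enriched categories and induces a homotopy equivalence on realizations. This yields $\Brspace(R)_R \simeq \upB\,\End_{s\calA^{fh}_R}(R)$, the classifying space of the endomorphism simplicial monoid at $R$.

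It remains to identify $\End_{s\calA^{fh}_R}(R)$ with the nerve of the Picard groupoid $\Picspace(R)$. Unpacking the definition of $s\calA^{fh}_R$, a $0$-simplex in this mapping space is an $R$-linear self-equivalence of $\calM_R$, and a $1$-simplex is a natural isomorphism between two such equivalences. Classical Morita theory identifies self-equivalences of $\calM_R$, up to natural isomorphism, with the groupoid of invertible $R$-modules and their isomorphisms, with composition corresponding to tensor product; this gives an equivalence of (symmetric monoidal) groupoids between $\End_{s\calA^{fh}_R}(R)$ and the Picard groupoid of $R$, hence a homotopy equivalence of realizations. Combining the two equivalences gives $\Brspace(R)_R \simeq \upB\Picspace(R)$, and together with the first paragraph this proves the theorem. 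The main technical point is ensuring that the reduction to the basepoint component really does use only the grouplike structure guaranteed by Proposition \ref{prop:pi0_is_a_group_1} and the $\Gamma$-space machinery of Section \ref{sec:background}; the Morita-theoretic identification of the endomorphism groupoid is then a formal consequence of \cite[Chapter II]{Bass}.
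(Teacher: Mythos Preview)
Your proof is correct and follows essentially the same route as the paper's: reduce to the component of the unit using the infinite loop space structure, identify that component with the classifying space of $R$-linear self-equivalences of $\calM_R$, and then invoke Morita theory to match this with the Picard groupoid. The only substantive difference is that the paper names the explicit comparison map---evaluation at $R$, sending a self-equivalence $F$ to the invertible module $F(R)$---and checks it on objects and on natural isomorphisms, whereas you cite Morita theory as a black box for the groupoid-level equivalence; making that map explicit is worth doing, since ``up to natural isomorphism'' alone only pins down $\pi_0$.
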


\begin{proof}
All components of an infinite loop space such as~$\Brspace(R)\simeq\Omega^\infty\br(R)$ have the same homotopy type. 

Therefore, it suffices to deal with the component of the unit~$R$. But that component is the realization of the groupoid of~$R$-linear self-equivalences of the category~$\calM_R$ and their natural isomorphisms. It remains to be verified that the space of~$R$-linear self-equivalences of the category~$\calM_R$ and their natural isomorphisms is naturally equivalent to the Picard space~$\Picspace(R)$. 

On the level of components, this follows from Morita theory, see~\cite{Bass}. On the level of spaces, the equivalence is given by evaluation at the symmetric monoidal unit~$R$. In more detail, if~$F$ is an~$R$-linear equivalence from~$\calM_R$ to itself, then~$F(R)$ is an invertible~$R$-symmetric~$(R,R)$-bimodule, and these are just the invertible~$R$-modules. If~$F\to G$ is a natural isomorphism between two~$R$-linear self-equivalences, this gives in particular an isomorphism~$F(R)\to G(R)$ betweem the corresponding two invertible~$R$-modules. This map induces the classical isomorphism on components, and the natural automorphisms of the identity are given by the units of~(the center of)~$R$, which are precisely the automorphisms of~$R$ as an~$R$-module.
\end{proof}

The preceding result implies the calculation of all higher homotopy groups of the Brauer space as a corollary. We note that a similar description and computation of the higher Whitehead groups of spaces is out of reach at the moment.

\begin{corollary}\label{cor:pis_of_Br(R)}
 If~$R$ is a commutative ring, then the Brauer space~$\Brspace(R)$ has at most three non-trivial homotopy groups: 
\begin{gather*}
\pi_0\Brspace(R)\cong\Br(R),\\
\pi_1\Brspace(R)\cong\Pic(R),\\
\pi_2\Brspace(R)\cong\GL_1(R),
\end{gather*}
and the results for the Brauer spectrum~$\br(R)$ are the same.
\end{corollary}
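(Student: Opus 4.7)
The proof is essentially an assembly of the results already at our disposal, so I will not encounter any serious obstacle; the main point is to chase through the identifications carefully and to remark on why no further homotopy groups appear.

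First, I would record $\pi_0\Brspace(R) \cong \Br(R)$, which is already the content of Proposition~\ref{prop:pi0_is_Brauer_1}, restated in~\eqref{eq:pi_0_Br}. This handles the $0$-th row.

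Next, for $n\geqslant 1$, I would use Theorem~\ref{thm:Pic_identification_rings}, which asserts that every path component of $\Brspace(R)$ is homotopy equivalent to $\upB\Picspace(R)$. In particular, choosing the component of the unit $R$,
\[
\pi_n\Brspace(R) \cong \pi_n\upB\Picspace(R) \cong \pi_{n-1}\Picspace(R)
\]
for all $n\geqslant 1$. Substituting $n=1$ and $n=2$ into this formula and invoking the identifications~\eqref{eq:pi_0_Pic} and~\eqref{eq:pi_1_Pic} yields $\pi_1\Brspace(R)\cong\Pic(R)$ and $\pi_2\Brspace(R)\cong\GL_1(R)$, as claimed.

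To see that there are no further non-trivial homotopy groups, I would observe that $\Picspace(R)$ is, by construction, the geometric realization of a groupoid, and hence is a $1$-truncated space. Consequently $\pi_k\Picspace(R)=0$ for all $k\geqslant 2$, and therefore $\pi_n\Brspace(R)=\pi_{n-1}\Picspace(R)=0$ for all $n\geqslant 3$. Finally, the corresponding statement for the Brauer spectrum $\br(R)$ follows from the equivalence $\Omega^\infty\br(R)\simeq\Brspace(R)$ of Proposition~\ref{prop:delooping_rings} together with the fact that $\br(R)$ arises from a $\Gamma$-space via the Segal machine and is therefore connective; hence $\pi_n\br(R)=\pi_n\Brspace(R)$ for $n\geqslant 0$ and $\pi_n\br(R)=0$ for $n<0$.
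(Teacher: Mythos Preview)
Your argument is correct and follows essentially the same route as the paper: identify $\pi_0$ via~\eqref{eq:pi_0_Br}, then use Theorem~\ref{thm:Pic_identification_rings} together with~\eqref{eq:pi_0_Pic} and~\eqref{eq:pi_1_Pic} for $\pi_1$ and $\pi_2$, and conclude for the spectrum via Proposition~\ref{prop:delooping_rings}. You are slightly more explicit than the paper in spelling out why the higher groups vanish (the Picard groupoid is $1$-truncated) and why the spectrum has no negative homotopy groups, but these are welcome elaborations rather than a different approach.
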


\begin{proof}
	The first of these is the isomorphism~\eqref{eq:pi_0_Br}, and the second follows from the preceding theorem together with the isomorphisms~\eqref{eq:pi_0_Pic}. For the third, we only need to recall~\ref{eq:pi_1_Pic}: the fundamental groups of the Picard space~$\Picspace(R)$ are all isomorphic to~$\GL_1(R)$. The final statement follows from the equivalence~$\Brspace(R)\simeq\Omega^\infty\br(R)$.
\end{proof}

We remark that Brauer spectra (and spaces) are rather special in the sense that not every~$2$-truncated connective spectrum is equivalent to the Brauer spectrum of a ring. This follows, for example, from the well-known fact that there is no commutative ring which has exactly five units, whereas there are clearly spaces and spectra such that their second homotopy group has order five.



\section{A scholion on the Azumaya complex}\label{sec:Duskin}

In this section, we will review some related work of Duskin and Street. Let again~$R$ be an ordinary commutative ring. In this case, Duskin, in~\cite{Duskin}, has built~a reduced Kan complex~$\Az(R)$, the {\it Azumaya complex}, with~$\pi_1\Az(R)$ isomorphic to the Brauer group of~$R$, with the group~$\pi_2\Az(R)$ isomorphic to the Picard group of~$R$, and with the group~$\pi_3\Az(R)$ isomorphic to the multiplicative group of units in~$R$.~(As~$\Az(R)$ is reduced, we may omit the base-point from the notation.) In fact, he hand-crafts the~$4$-truncation so that the homotopy groups work out as stated, and then he takes its~$4$-co-skeleton. 

Here is a sketch of his construction. There is only one~$0$-simplex in~$\Az(R)$. It does not need a name, but it can be thought of as the commutative ring~$R$. We note that also our $s\calA^{fh}_R(0_+)$ is the trivial category. The~$1$-simplices in the Azumaya complex~$\Az(R)$ are the Azumaya~$R$-algebras~$A$. (In particular, the degenerate~$1$-simplex is given by the~$R$-algebra~$R$ itself.) We note that these are precisely the objects in our category~\hbox{$s\calA^{fh}_R(1_+)\simeq s\calA^{fh}_R$}; but, the latter comes with higher homotopy information form the mapping spaces. Now a map $\partial\Delta^2\to\Az(R)$ corresponds to three Azumaya algebras $A_1$, $A_2$, and $A_{12}$, and the $2$-simplices in $\Az(R)$ with this given restriction are defined to be the~$R$-symmetric~$(A_{12},A_1\otimes_RA_2)$-bimodules $F$ which are invertible.~A suggestive notation is~\hbox{$F\colon A_{12}\Rightarrow A_1\otimes_RA_2$}. By Example~\ref{ex:gamma012}, these are essentially the objects of the category $s\calA^{fh}_R(2_+)$, except for the fact that we are working with the actual equivalences defined by the bimodules. Now a map $\partial\Delta^3\to\Az(R)$ corresponds to four bimodules
\begin{align*}
	A_{123}&\Longrightarrow A_{12}\otimes_RA_3\\
	A_{123}&\Longrightarrow A_1\otimes_RA_{23}\\
	A_{12}&\Longrightarrow A_1\otimes_RA_2\\
	A_{23}&\Longrightarrow A_2\otimes_RA_3,
\end{align*}
and the $4$-simplices in $\Az(R)$ with this boundary are the isomorphisms $u$ between the two corresponding bimodules
\[
	A_{123}\Longrightarrow A_1\otimes_RA_2\otimes_RA_3
\]
that can be obtained by tensoring them in the two meaningful ways. Finally, the $4$-simplices of $\Az(R)$ are uniquely determined by their boundary, and their existence depends on a compatibility condition that we will not recall here.

\begin{remark}\label{rem:Street}
As already mentioned in~\cite{Duskin}, Street has described some catego\-ri\-cal structures underlying Duskin's construction. However, these were published only much later, in~\cite{Street}. Street considers the bicategory whose objects are~$R$-algebras, whose morphism~$M\colon A\to B$ are~$R$-symmetric~$(A,B)$-bimodules, and whose~$2$-cells~\hbox{$f\colon M\Rightarrow N$} are bimodule morphisms; vertical composition is composition of functions and horizontal composition of modules~$M\colon A\to B$ and~\hbox{$N\colon B\to C$} is given by tensor product~$M\otimes_B N\colon A\to C$ over~$B$. The tensor product~$A\otimes_R B$ of algebras is again an algebra, and this makes this category a monoidal bicategory. He then passes to its suspension, the one-object tricategory whose morphism bicategory is the category described before and whose composition is the tensor product of algebras. While this cannot, in general, be rigidified to a 3-category, there is a~$3$-equivalent~Gray category. The Gray subcategory of invertibles consists of the arrows~$A$ which are biequivalences, the~$2$-cells~$M$ which are equivalences, and the~$3$-cells~$f$ which are isomorphisms, so that the morphisms~$A$ are the Azumaya algebras, and the~$2$-cells are the Morita equivalences. The nerve of this Gray subcategory is Duskin's complex~$\Az(R)$.
\end{remark}

In this paper, we have chosen to present an approach that does not involve higher categories, at least none that do not have a well-defined composition. While one may argue that the loop space $\Omega\Az(R)$ would be equivalent to the Brauer space $\Brspace(R)$, the present direct construction seems to be more natural. It certainly seems rather artificial to realize the Brauer group as $\pi_1$ instead of $\pi_0$. In any case, our delooping~$|s\calA^{fh}_R(\upS^1)|$ provides for a space with such a $\pi_1$ as well, if so desired, see Proposition~\ref{prop:delooping_rings}. In fact, the general algebraic K-theory machinery used here yields arbitrary deloopings $|s\calA^{fh}_R(\upS^n)|$ without extra effort. This feature seems to be unavailable in the approach of Duskin and Street.

\section{Brauer spectra for structured ring spectra}\label{sec:Brauer_for_commutative_S-algebras}

We will now transfer the preceding theory from the context of commutative rings to the context of  structured ring spectra. There are many equivalent models for this, such as symmetric spectra~\cite{HSS} or~$\Sbb$-modules~\cite{EKMM}, and we will choose the latter for the sake of concordance with~\cite{Baker+Richter+Szymik}. In Section~\ref{sec:Brauer_for_commutative_rings}, we have defined a Brauer space~$\Brspace(R)$ and a Brauer spectrum~$\br(R)$ for each commutative ring~$R$, starting from a category~$\calA_R$ and its subcategory~$s\calA^{fh}_R$. If now~$R$ denotes a commutative~$\Sbb$-algebra, we may proceed similarly. Let us see how to define the corresponding categories.

\subsection*{The categories~$\calA_R$}

Let~$\calA_R$ denote the category of cofibrant~$R$-algebras and~$R$-functors between their categories of modules. This is slightly more subtle than the situation for ordinary rings, as the categories of modules are not just categories, but come with homotopy theories. In order to take this into
account, the model categories of modules will first be replaced by the simplicial categories obtained from them by Dwyer-Kan localization. We note that the categories of modules are enriched in the symmetric monoidal model category of~$R$-modules in this situation. This allows us to use the model structure from~\cite[Appendix A.3]{Lurie:HTT} on these. Then~$\calA_R$ is again a simplicial category: The class of objects~$A$ is still discrete, and the space of morphisms~$A\to B$ is the derived mapping space of~$R$-functors~$\calM_A\to\calM_B$.

\subsection*{Decorated variants}

There is the full subcategory~$\calA^f_R$, where~$A$ is assumed to satisfy the finiteness condition used in~\cite{Baker+Richter+Szymik}: it has to be faithful and dualizable as an~$R$-module.

Also, there is the full subcategory~$\calA^h_R$, where we assume that the natural map
\begin{equation*}
	A\wedge_RA^\circ\longrightarrow\End_R(A)
\end{equation*}
is an equivalence. We are mostly interested in the intersection
\begin{equation*}
	\calA^{fh}_R=\calA^f_R\cap\calA^h_R,
\end{equation*}
which consists precisely the Azumaya~$R$-algebras in the sense of~\cite{Baker+Richter+Szymik}. While~$f$ and~$h$ again refer to restrictions on the objects, the prefix~$s$ will indicate that we are considering only those~$R$-functors which are equivalences of simplicial categories, and their natural equivalences. The standard references for Morita theory in this context are~\cite{Schwede+Shipley} as well as the expositions~\cite{Schwede} and~\cite{Shipley}. Up to natural equivalence, the~$R$-equivalences are of the form~\hbox{$X\longmapsto X\wedge_AM$} for some invertible~$R$-symmetric~$(A,B)$-bimodule~$M$. Similarly, the higher simplices codify natural equivalences rather than all natural transformations. This ends the description of the simplicial category~$s\calA^{fh}_R$. The following result describes one of its mapping spaces.

\begin{proposition}\label{prop:auto_R}
	The space of auto-equivalences of the category of~$R$-modules is naturally equivalent to the space of invertible~$R$-modules.
\end{proposition}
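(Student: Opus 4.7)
The plan is to construct the equivalence via evaluation at the unit $R$, mirroring the argument that proved Theorem~\ref{thm:Pic_identification_rings} in the discrete case, but now in the spectral Morita-theoretic setting of Schwede--Shipley. Concretely, I would define the evaluation map
\[
	\mathrm{ev}_R\colon \mathrm{Aut}_R(\calM_R)\longrightarrow\Picspace(R),\qquad F\longmapsto F(R),
\]
and show that it is a weak equivalence of spaces. That this lands in invertible $R$-modules is immediate: if $F$ is an $R$-linear self-equivalence with quasi-inverse $F^{-1}$, then $F(R)\wedge_R F^{-1}(R)\simeq(F\circ F^{-1})(R)\simeq R$ by the fact that $F$ is $R$-linear (so that it commutes with smashing with $R$-modules up to natural equivalence), making $F(R)$ invertible with inverse $F^{-1}(R)$.

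For essential surjectivity I would invoke the spectral Morita theorem recalled in the paragraph preceding the statement: every invertible $R$-symmetric $(R,R)$-bimodule $M$ is just an invertible $R$-module (since $R$ is commutative), and smashing $-\wedge_R M$ defines an $R$-linear self-equivalence of $\calM_R$ whose quasi-inverse is $-\wedge_R M^{-1}$. Applying the evaluation map to this functor recovers $M$ (up to natural equivalence $R\wedge_R M\simeq M$), so $\mathrm{ev}_R$ hits every component of $\Picspace(R)$.

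For the mapping-space statement, it suffices by essential surjectivity to compare the automorphism spaces at a chosen invertible module $M$. On one side, the space of natural equivalences of the functor $-\wedge_R M$ is, by the enriched Yoneda lemma for $R$-linear functors out of the free module $\calM_R$ of rank one, equivalent to the space of $R$-module self-equivalences of $M$. On the other side, this is exactly the automorphism space of $M$ in the Picard groupoid of $R$. Thus $\mathrm{ev}_R$ is fully faithful on mapping spaces.

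The main obstacle here is the Yoneda step: in the discrete setting, a natural transformation between $R$-linear functors out of $\calM_R$ is determined by its value at the generator $R$ by a direct one-line computation, but in the spectral context one needs the corresponding enriched/derived Yoneda statement for $R$-linear functors between Dwyer--Kan localizations of module categories. I would appeal to the formalism of \cite{Schwede+Shipley} (and the expositions \cite{Schwede,Shipley}) which realizes $R$-linear functors $\calM_R\to\calM_R$ as smashing with bimodules up to coherent natural equivalence, so that the full mapping space of natural equivalences of $-\wedge_R M$ is modeled by the derived mapping space $\mathrm{Map}_R(M,M)^{\times}$ of self-equivalences of $M$. Once this Yoneda-type identification is in place, combining it with the essential surjectivity step yields the claimed equivalence of spaces.
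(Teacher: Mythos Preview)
Your proposal is correct and follows essentially the same approach as the paper: the paper's proof simply names the evaluation map $F\mapsto F(R)$ and its inverse $M\mapsto(-\wedge_R M)$, cites~\cite[4.1.2]{Schwede+Shipley}, and declares the argument formally identical to the discrete case in Theorem~\ref{thm:Pic_identification_rings}. Your write-up is in fact more detailed than the paper's own proof, spelling out the essential surjectivity and the Yoneda step for full faithfulness that the paper leaves implicit in its reference to Schwede--Shipley.
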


\begin{proof}
	This is formally the same as the corresponding result in the proof of Theorem~\ref{thm:Pic_identification_rings}. A map in one direction is given by the evaluation that sends an equivalence to its value on~$R$. On the other hand, given an invertible~$R$-module, the smash product with it defines an equivalence. Compare with~\cite[4.1.2]{Schwede+Shipley}.
\end{proof}


\subsection*{A symmetric monoidal structure}

A symmetric monoidal structure on~$\calA_R$ and its subcategories is induced by the smash product
\begin{equation*}
	(A,B)\mapsto A\wedge_RB
\end{equation*}
of~$R$-algebras with neutral element~$R$. We note that this is not the categorical sum, as the morphisms in these categories are not just the algebra maps.

\begin{proposition}\label{prop:Br_is_a_group_2}
	With the induced multiplication, the abelian monoid~$\pi_0|s\calA^{fh}_R|$ of isomorphism classes of objects is an abelian group, and this abelian group is isomorphic to the Brauer group~$\Br(R)$ of the commutative~$\Sbb$-algebra~$R$ in the sense of~{\upshape\cite{Baker+Richter+Szymik}}.
\end{proposition}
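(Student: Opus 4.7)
The plan is to mirror the proofs of Proposition~\ref{prop:pi0_is_a_group_1} and Proposition~\ref{prop:pi0_is_Brauer_1}, replacing ordinary tensor products by smash products over~$R$ and isomorphisms by equivalences, and invoking the spectral Morita theory of~\cite{Schwede+Shipley} in place of classical Morita theory. Since both claims of the proposition are close parallels of their ring-theoretic analogues, the main work is bookkeeping rather than conceptual.

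First, I would verify that $\pi_0|s\calA^{fh}_R|$ is a group. Let $A$ be an object of $s\calA^{fh}_R$, that is, an Azumaya $R$-algebra in the sense of~\cite{Baker+Richter+Szymik}. The defining equivalence
\[
	A\wedge_R A^\circ \longrightarrow \End_R(A)
\]
of the $\calA^h_R$ condition gives $[A]+[A^\circ]=[\End_R(A)]$ in the monoid. Because $A$ is faithful and dualizable over $R$, it is a compact generator of $\calM_R$, so smashing with $A$ defines an $R$-linear equivalence $\calM_R\simeq\calM_{\End_R(A)}$; this is a morphism in $s\calA^{fh}_R$ and hence a $1$-simplex in the realization connecting $R$ to $\End_R(A)$. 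Therefore $[\End_R(A)]=[R]$, and $[A^\circ]$ is inverse to $[A]$.

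Second, I would identify this group with the Brauer group of~\cite{Baker+Richter+Szymik}. Both groups have the same representatives, namely Azumaya $R$-algebras, and the same multiplication and unit, so the only point is that the two equivalence relations coincide. The relation in~\cite{Baker+Richter+Szymik} is generated by simple extensions $A\sim A\wedge_R\End_R(P)$ with $P$ faithful and dualizable over $R$, whereas our relation declares $A\sim B$ whenever there is an $R$-linear equivalence $\calM_A\simeq\calM_B$. The first direction is exactly the argument above: smashing with $\End_R(P)$ is a Morita equivalence by Proposition~\ref{prop:auto_R} together with dualizability of $P$. The converse uses the spectral Morita theorem of~\cite{Schwede+Shipley}: any such $R$-linear equivalence is given, up to natural equivalence, by smash product with an invertible $R$-symmetric $(A,B)$-bimodule $M$, and the usual Morita identification $B\simeq\End_A(M)$ combined with faithful dualizability of $M$ over $R$ realizes $B$ as a simple extension of $A$.

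The main obstacle will be making the last step precise in the spectral setting: one has to be careful that cofibrant replacements, derived smash products, and the Dwyer-Kan enrichment used to define $\calA_R$ are compatible with invoking~\cite{Schwede+Shipley}, and that the restriction of scalars of an invertible $(A,B)$-bimodule really is faithful and dualizable as an $R$-module. Both of these are folklore consequences of the construction of Azumaya algebras in~\cite{Baker+Richter+Szymik}, but they require some care to state and verify at the level of the simplicial category $s\calA^{fh}_R$. Once this compatibility is in hand, both inclusions of equivalence relations are formal and, together with the inverse construction of the first step, produce the desired isomorphism with $\Br(R)$.
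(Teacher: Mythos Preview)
Your proposal is correct and is precisely the approach the paper takes: the paper's entire proof reads ``This is formally the same as the proofs of Proposition~\ref{prop:pi0_is_a_group_1} and Proposition~\ref{prop:pi0_is_Brauer_1},'' which is exactly what your first sentence announces. You have simply spelled out what that formal translation amounts to (and flagged the model-categorical housekeeping the paper leaves implicit), so there is nothing to correct or contrast.
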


\begin{proof}
	This is formally the same as the proofs of Proposition~\ref{prop:pi0_is_a_group_1} and Proposition~\ref{prop:pi0_is_Brauer_1}.
\end{proof}


\subsection*{Brauer spaces and spectra}

The following definition is suggested by Proposition~\ref{prop:Br_is_a_group_2}.

\begin{definition}
Let~$R$ be a commutative~$\Sbb$-algebra. The space
\begin{equation*}
	\Brspace(R)=|s\calA^{fh}_R|
\end{equation*}
is called the {\it Brauer space} of~$R$. 
\end{definition}

By Proposition~\ref{prop:Br_is_a_group_2}, there is an isomorphism
\begin{equation}
	\pi_0\Brspace(R)\cong\Br(R)
\end{equation}
that is natural in~$R$.

As described in Section~\ref{sec:background}, the symmetric monoidal structure on~$s\calA^{fh}_R$ also gives rise to an algebraic K-theory spectrum.

\begin{definition}
	Let~$R$ be a commutative~$\Sbb$-algebra. The spectrum
	\begin{equation*}
		\br(R)=\bfk(s\calA^{fh}_R)
	\end{equation*}
	is called the {\it Brauer spectrum} of~$R$. 
\end{definition}

We will now spell out the relation between the Brauer space~$\Brspace(R)$ and the Brauer spectrum~$\br(R)$ in detail.

\begin{proposition}
	 There are natural homotopy equivalences
	 \begin{equation*}
		\Omega^\infty\br(R)
		\simeq\Omega|s\calA^{fh}_R(\upS^1)|
		\simeq|s\calA^{fh}_R(\upS^0)|
		\simeq|s\calA^{fh}_R|=\Brspace(R).
	\end{equation*}
\end{proposition}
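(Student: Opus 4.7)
The plan is to mirror the proof of Proposition~\ref{prop:delooping_rings} from the commutative ring case, since the formal structure is identical: we have a symmetric monoidal (simplicial) category $s\calA^{fh}_R$ whose monoid of isomorphism classes is already a group. I will therefore identify each of the three equivalences with an instance of the general $\Gamma$-space machinery reviewed in Section~\ref{sec:background}.

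First I would fix that $\br(R)=\bfk(s\calA^{fh}_R)$ is, by construction, the spectrum associated with the $\Gamma$-category determined by the symmetric monoidal structure on $s\calA^{fh}_R$, and its $n$-th space is $|s\calA^{fh}_R(\upS^n)|$. The realization of this $\Gamma$-category is a special $\Gamma$-space because the Segal functors of the $\Gamma$-category associated with a symmetric monoidal category are equivalences of categories, and geometric realization preserves these. Theorem~\ref{thm:group_completion} then yields that the adjoint structure maps $|s\calA^{fh}_R(\upS^n)|\to\Omega|s\calA^{fh}_R(\upS^{n+1})|$ are equivalences for $n\geqslant 1$; taking $n=1$ and looping once produces the first equivalence
\[
\Omega^\infty\br(R)\simeq\Omega|s\calA^{fh}_R(\upS^1)|.
\]

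Next I would dispatch the third equivalence $|s\calA^{fh}_R(\upS^0)|\simeq|s\calA^{fh}_R|$ by appealing to Examples~\ref{ex:gamma012}: evaluation at the non-basepoint defines an equivalence $\calV(1_+)\simeq\calV$ for any symmetric monoidal $\calV$, and this equivalence is natural enough to survive geometric realization in the simplicially enriched setting of cofibrant $R$-algebras and derived mapping spaces.

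The middle equivalence is where the content lies. It is the case $n=0$ of Theorem~\ref{thm:group_completion}, and it requires that the $\Gamma$-space at hand be not just special but \emph{very special}, i.e.\ that the abelian monoid $\pi_0|s\calA^{fh}_R(\upS^0)|\cong\pi_0|s\calA^{fh}_R|$ actually be a group. This is exactly the content of Proposition~\ref{prop:Br_is_a_group_2}. Invoking Theorem~\ref{thm:group_completion} then supplies the equivalence $|s\calA^{fh}_R(\upS^0)|\simeq\Omega|s\calA^{fh}_R(\upS^1)|$.

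The only real obstacle, compared with the 1-categorical proof, is making sure that the $\Gamma$-category machinery from Section~\ref{sec:background} genuinely applies to the simplicially enriched category $s\calA^{fh}_R$ built from derived mapping spaces of $R$-functors. Concretely, one must check that the Segal functors $s\calA^{fh}_R(N,n)\to\prod s\calA^{fh}_R(\upS^0)$ are equivalences of simplicial categories (not merely of their underlying ordinary categories), so that after realization one indeed obtains a special $\Gamma$-space; this, however, is a formal consequence of the universal property of the symmetric monoidal smash product $\wedge_R$ of $R$-algebras together with Proposition~\ref{prop:auto_R}, which pins down the relevant mapping spaces. Once this is in place, the three equivalences follow verbatim as in the proof of Proposition~\ref{prop:delooping_rings}.
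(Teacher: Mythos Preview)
Your proof is correct and follows essentially the same approach as the paper: the paper's proof simply states that the first and third equivalences hold generally for symmetric monoidal categories (by the material in Section~\ref{sec:background}), while the second uses Proposition~\ref{prop:Br_is_a_group_2} to supply group completeness. You have unpacked exactly these references (Theorem~\ref{thm:group_completion}, Examples~\ref{ex:gamma012}) and identified the role of Proposition~\ref{prop:Br_is_a_group_2} correctly. Your additional paragraph about the simplicial enrichment is a fair caveat, though the paper does not address it explicitly either; the appeal to Proposition~\ref{prop:auto_R} there is not really needed, since the Segal condition is a formal consequence of the $\Gamma$-category construction and does not depend on identifying the automorphism spaces.
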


\begin{proof}
As explained in Section~\ref{sec:background}, the first and third of these generally hold for symmetric monoidal categories, and the second uses the additional information provided by Proposition~\ref{prop:Br_is_a_group_2}, namely that the spaces on the right hand side are already group complete.
\end{proof}

In particular, we also have natural isomorphisms
\[
	\pi_0\br(R)\cong\Br(R),
\]
of abelian groups, so that we can recover the Brauer group as the~$0$-th homotopy group of a spectrum. We will now determine the higher homotopy groups thereof.


\subsection*{A review of Picard spaces and spectra}

As for the Brauer space of a commutative ring, also in the context of structured ring spectra, there is a relation to the Picard groupoid of~$R$, and this will be discussed now. 

Let~$R$ be a cofibrant commutative~$\Sbb$-algebra. In analogy with the situation for discrete rings, it is only natural to make the following definition, compare~\cite[Remark~1.3]{Ando+Blumberg+Gepner}.

\begin{definition}
	The {\it Picard space}~$\Picspace(R)$ of~$R$ is the classifying space of the Picard groupoid: the simplicial groupoid of invertible~$R$-modules and their equivalences.
\end{definition}

We note that the components of the Picard space are the equivalence classes of invertible~$R$-modules, and with respect to the smash product~$\wedge_R$, these components form a group, by the very definition of an invertible module. This is Hopkins' Picard group~$\Pic(R)$ of~$R$:
\begin{displaymath}
	\pi_0(\Picspace(R))\cong\Pic(R).
\end{displaymath}
See for example~\cite{Strickland},~\cite{Hopkins+Mahowald+Sadofsky}, and~\cite{Baker+Richter} for more information about this group. In contrast to the case of discrete commutative rings, the Picard space of a commutative~$\Sbb$-algebra is no longer $1$-truncated. There is an equivalence
\[
	\Picspace(R)\simeq\Pic(R)\times\upB\GL_1(R),
\]
where $\upB\GL_1(R)$ is the classifying space for the units of $R$.

The Picard category is symmetric monoidal with respect to the smash product~$\wedge_R$. Therefore, as in the case of discrete rings, there is also a Picard spectrum~$\pic(R)$ such that~$\Picspace(R)\simeq\Omega^\infty\pic(R)$, and there is a connected delooping
\[
	\upB\Picspace(R)\simeq\Omega^\infty\Sigma\pic(R)
\]
such that its homotopy groups are isomorphic to those of~$\Picspace(R)$, but shifted up by one. We will see now that the Brauer spaces provide for another delooping that is typically non-connected.


\subsection*{Higher homotopy groups}

After this recollection, let us now see how the Picard spaces and spectra relate to the Brauer spaces and spectra defined above.

\begin{theorem}\label{thm:Pic_identification_S-algebras}
	If~$R$ is a commutative~$\Sbb$-algebra, then the component of the neutral vertex~$R$ in the Brauer space~$\Brspace(R)$ is naturally equivalent (as an infinite loop space) to the classifying space of the Picard groupoid~$\Picspace(R)$.
\end{theorem}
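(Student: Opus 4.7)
\textbf{The plan is to follow the template of the proof of Theorem~\ref{thm:Pic_identification_rings}}, replacing the classical Morita identification used there with its structured-spectra version, which is precisely Proposition~\ref{prop:auto_R}. First I would identify the neutral component of $\Brspace(R) = |s\calA^{fh}_R|$ with the classifying space $\upB M$ of the simplicial endomorphism monoid $M = \Map_{s\calA^{fh}_R}(R, R)$: because the mapping spaces of $s\calA^{fh}_R$ are Kan by construction (the higher simplices codify natural equivalences, not arbitrary natural transformations), the path-component of the vertex $R$ in the realization of this simplicial category coincides with the realization of the full sub-simplicial-category on the single object $R$, which is precisely $\upB M$. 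Proposition~\ref{prop:auto_R} then identifies $M$ naturally with $\Picspace(R)$, so delooping gives a natural equivalence of underlying spaces between the neutral component of $\Brspace(R)$ and $\upB\Picspace(R)$.

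To upgrade this to an equivalence of infinite loop spaces, I would build a symmetric monoidal functor from the Picard groupoid of invertible $R$-modules into $s\calA^{fh}_R$ sending an invertible module $L$ to the algebra $R$ equipped with the Morita auto-equivalence $(-)\wedge_R L$ of $\calM_R$. The monoidal coherence data are supplied by the canonical equivalences $(-\wedge_R L_1)\circ(-\wedge_R L_2)\simeq -\wedge_R(L_1\wedge_R L_2)$ and $\id_{\calM_R}\simeq -\wedge_R R$, which implement the compatibility between composition of auto-equivalences (the monoid structure on $M$) and smash product of modules (the symmetric monoidal structure on the Picard groupoid). By the functoriality discussed at the end of Section~\ref{sec:background}, this induces a map of algebraic K-theory spectra whose $\Omega^\infty$ realizes, on the neutral summand, the space-level equivalence produced in the previous paragraph.

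The hard part will be verifying that the evaluation-at-$R$ equivalence of Proposition~\ref{prop:auto_R} is strictly unitally op-lax symmetric monoidal in the precise sense required by the recipe of Section~\ref{sec:background}. Unitality is automatic, since the identity functor and the module $R$ are the respective units. The op-lax monoidal coherence, and its symmetry, is essentially the content of Morita theory for commutative $\Sbb$-algebras, as developed in~\cite{Schwede+Shipley} and surveyed in~\cite{Schwede} and~\cite{Shipley}; some care is needed to arrange the comparison as an honest strictly unital op-lax symmetric monoidal functor rather than merely a monoidal equivalence up to coherent natural equivalence, but this is a standard strictification. Once assembled, the general $\Gamma$-category machinery of Section~\ref{sec:background} delivers the claimed natural equivalence of infinite loop spaces without further work.
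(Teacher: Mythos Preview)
Your approach is essentially the paper's: identify the neutral component with $\upB M$ for $M$ the simplicial monoid of Morita self-equivalences of $\calM_R$, then invoke Proposition~\ref{prop:auto_R}. The paper's proof stops at that point and does not spell out the infinite loop space comparison at all; your second and third paragraphs supply what the paper leaves implicit, by building a (strictly unital, op-lax) symmetric monoidal comparison so that the $\Gamma$-machinery of Section~\ref{sec:background} upgrades the space-level equivalence. That extra care is a genuine addition rather than a different route.

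One small imprecision in your first paragraph: the neutral component of $|s\calA^{fh}_R|$ does not literally \emph{coincide} with the realization of the one-object subcategory on $R$---it contains every Morita-trivial Azumaya $R$-algebra as a vertex---but the inclusion of the one-object subcategory is an equivalence because all morphisms in $s\calA^{fh}_R$ are invertible. Also, your description of the comparison functor (``sending $L$ to the algebra $R$ equipped with the Morita auto-equivalence $(-)\wedge_R L$'') blurs the categorical levels: objects of the Picard groupoid must go to objects of $s\calA^{fh}_R$, so every $L$ is sent to $R$, while the invertible module $L$ itself is what furnishes a $1$-simplex $R\to R$ (the equivalence $(-)\wedge_R L$); the symmetric monoidal structure $\wedge_R$ on the source then matches composition in the endomorphism monoid of $R$ on the target. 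With that read, the argument goes through.
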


\begin{proof}
	The component of the neutral element~$R$ in~$\Brspace(R)=|s\calA^{fh}_R|$ is equivalent to the classifying space of the automorphism group of $R$ in $s\calA^{fh}_R$. By definition of that category, this is the group-like simplicial monoid of Morita self-equivalences of the category $\calM_R$ of $R$-modules. The result now follows from Proposition~\ref{prop:auto_R}: this is equivalent to the simplicial groupoid of invertible $R$-modules, the Picard groupoid of $R$.
\end{proof}

\begin{corollary}\label{cor:Pic_identification}
	There are natural isomorphisms
	\begin{equation*}
		\pi_n\Brspace(R)\cong\pi_{n-2}\Picspace(R)
	\end{equation*}
for~$n\geqslant2$, 
\begin{equation*}
	\pi_n\Brspace(R)\cong\pi_{n-3}\GL_1(R)
\end{equation*}
for~$n\geqslant3$, and 
\begin{equation*}
	\pi_n\Brspace(R)\cong\pi_{n-3}(R).
\end{equation*}
for~$n\geqslant4$. 
\end{corollary}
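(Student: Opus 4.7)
The strategy is to combine Theorem~\ref{thm:Pic_identification_S-algebras} with the description of $\pi_*\Picspace(R)$ recalled in the review of the Picard space just above, and then unpack the resulting identification step by step.

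First, since $\Brspace(R)\simeq\Omega^\infty\br(R)$ is the underlying infinite loop space of a spectrum, all of its path components are canonically equivalent, and so for every $n\geqslant 1$ the group $\pi_n\Brspace(R)$ agrees with $\pi_n$ of the component of the unit vertex~$R$. By Theorem~\ref{thm:Pic_identification_S-algebras}, this component is naturally equivalent (as an infinite loop space) to the classifying space associated to the Picard groupoid of~$R$; reading off the resulting dimension shift yields the first isomorphism $\pi_n\Brspace(R)\cong\pi_{n-2}\Picspace(R)$ for $n\geqslant 2$.

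The second formula follows by plugging in the splitting $\Picspace(R)\simeq\Pic(R)\times\upB\GL_1(R)$ recalled just above, which gives $\pi_m\Picspace(R)\cong\pi_{m-1}\GL_1(R)$ for all $m\geqslant 1$. Substituting $m=n-2$, which is legitimate precisely when $n\geqslant 3$, produces $\pi_n\Brspace(R)\cong\pi_{n-3}\GL_1(R)$. Finally, $\GL_1(R)$ is the union of those path components of $\Omega^\infty R$ indexed by the units $\pi_0(R)^\times$, so one has $\pi_m\GL_1(R)\cong\pi_m(R)$ for $m\geqslant 1$. A further substitution $m=n-3$, valid for $n\geqslant 4$, gives the third isomorphism $\pi_n\Brspace(R)\cong\pi_{n-3}(R)$.

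No step in this assembly is deep: once Theorem~\ref{thm:Pic_identification_S-algebras} has been established and the splitting of $\Picspace(R)$ has been reviewed, the proof is essentially bookkeeping. The only care required is to keep track of the minimal ranges of~$n$ for which each substitution in the chain remains legitimate, which is exactly what dictates the lower bounds $n\geqslant 2$, $n\geqslant 3$, and $n\geqslant 4$ in the three asserted formulas.
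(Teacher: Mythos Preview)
Your proof is correct and follows essentially the same approach as the paper's own argument: deduce the first isomorphism directly from Theorem~\ref{thm:Pic_identification_S-algebras}, the second from the fact that $\Picspace(R)$ is a delooping of $\GL_1(R)$, and the third from $\pi_m\GL_1(R)\cong\pi_m(R)$ for $m\geqslant 1$. Your write-up is a bit more explicit about the infinite loop space argument for why all components agree and about the index bookkeeping, but the substance is identical.
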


\begin{proof}
	The first statement is an immediate consequence of the preceding theorem. The second follows from the first and the fact that the Picard space is a delooping of the space of units, and the last statement follows from the second and~$\pi_n\GL_1(R)\cong\pi_n(R)$ for~\hbox{$n\geqslant1$}.
\end{proof}

We note in particular that the Brauer space is~$2$-truncated in the case of an Eilenberg-Mac Lane spectrum. This will be used in~Section~\ref{sec:EM}.


\section{Functoriality} 

In this section, we will see how functorality of the K-theory construction immediately leads to spacial and spectral versions of relative Brauer invariants as well as to a characteristic map that codifies the obstructions to pass from topological information about Eilenberg-Mac Lane spectra to algebra.

\subsection*{Relative invariants}\label{sec:relative}

In this section, we define relative Brauer spectra, as these are likely to be easier to compute than their absolute counterparts. We will focus on the case of extensions of commutative~$\Sbb$-algebras, but the case of ordinary commutative rings is formally identical.

To start with, let us first convince ourselves that the construction of the Brauer space (or spectrum) is sufficiently natural.

\begin{proposition}\label{prop:naturality}
	If~$R\to S$ is a map of commutative~$\Sbb$-algebras, then there is a map
	\begin{equation*}
		\br(R)\longrightarrow\br(S)
	\end{equation*}
	of Brauer spectra, and similarly for Brauer spaces.
\end{proposition}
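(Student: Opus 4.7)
The plan is to construct, for each map $f\colon R\to S$ of commutative~$\Sbb$-algebras, a strictly unital op-lax symmetric monoidal functor $f_*\colon s\calA^{fh}_R\to s\calA^{fh}_S$ and then invoke the functoriality of the algebraic K-theory machine recorded at the end of Section~\ref{sec:background}. On objects, $f_*$ is base change $A\mapsto S\wedge_R A$ (composed, if needed, with a functorial cofibrant replacement in the category of $S$-algebras). On the derived mapping spaces of $R$-functors $\calM_A\to\calM_B$, we send such a functor to its extension of scalars to $\calM_{S\wedge_R A}\to\calM_{S\wedge_R B}$, using that $\calM_{S\wedge_R A}$ is the category of $R$-modules over $\calM_A$ with an $S$-action compatible with the $A$-action via $f$.

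First I would check that $f_*$ takes $\calA^{fh}_R$ into $\calA^{fh}_S$. The class of faithful dualizable modules is preserved by any symmetric monoidal base change, and the Azumaya condition $A\wedge_R A^\circ\to\End_R(A)$ is preserved because smashing with $S$ over $R$ intertwines $\wedge_R$ and $\wedge_S$ in the appropriate way, turning the $R$-equivalence into $(S\wedge_R A)\wedge_S(S\wedge_R A)^\circ\to\End_S(S\wedge_R A)$. Next, the morphisms prefixed by $s$ are preserved, because an $R$-linear equivalence $\calM_A\to\calM_B$ is, up to natural equivalence, smashing over $A$ with an invertible $R$-symmetric bimodule $M$, and $S\wedge_R M$ is then an invertible $S$-symmetric bimodule, inducing an $S$-linear equivalence between the base-changed module categories; the simplicial enrichment is compatible since natural equivalences base change to natural equivalences.

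Then I would verify the monoidal structure. The canonical comparison map
\[
\Phi\colon S\wedge_R(A\wedge_R B)\longrightarrow(S\wedge_R A)\wedge_S(S\wedge_R B)
\]
is even a weak equivalence, compatible with the associativity, unitality, and symmetry isomorphisms; and the equality $f_*(R)=S\wedge_R R=S$ is strict for the usual point-set model. Together these data provide the strictly unital op-lax symmetric monoidal structure required by the end of Section~\ref{sec:background}, yielding an induced map of $\Gamma$-categories $s\calA^{fh}_R(-)\to s\calA^{fh}_S(-)$, and therefore the desired map $\br(R)=\bfk(s\calA^{fh}_R)\to\bfk(s\calA^{fh}_S)=\br(S)$ of spectra. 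The corresponding map of Brauer spaces is obtained by applying $\Omega^\infty$ and using the identifications of Proposition~\ref{prop:delooping_rings}, and the argument for ordinary commutative rings is identical with $\wedge_R$ replaced by $\otimes_R$.

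The main obstacle I anticipate is point-set strictness: the cleanest input for the K-theory machine requires $f_*$ to be a strictly unital functor between honest symmetric monoidal simplicial categories, not merely a pseudofunctor, so some care is needed in picking a functorial cofibrant replacement and a model for $\wedge_R$ for which $R$ is a strict unit, or, alternatively, in appealing to a rectification result for op-lax symmetric monoidal functors between such categories. Once this bookkeeping is arranged, no further homotopical input is needed.
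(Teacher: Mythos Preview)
Your approach is correct and is essentially the same as the paper's: the map is induced by base change $A\mapsto S\wedge_R A$, which the paper notes preserves Azumaya algebras by \cite[Proposition~1.5]{Baker+Richter+Szymik} and hence yields a functor between the relevant symmetric monoidal categories. You have simply spelled out in more detail what the paper's three-line proof leaves implicit---the action on morphisms, the op-lax monoidal comparison map~$\Phi$, and the point-set strictness caveats.
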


\begin{proof}
	The map is induced by~$A\mapsto S\wedge_RA$. By~\cite[Proposition 1.5]{Baker+Richter+Szymik}, it maps Azumaya algebras to Azumaya algebras. It therefore induces functors between the symmetric monoidal categories used to define the Brauer spectra. 
\end{proof}

If~$R\to S$ is a map of commutative~$\Sbb$-algebras, then~$\br(S/R)$ and~$\Brspace(S/R)$ will denote the homotopy fibers of the natural maps in Proposition~\ref{prop:naturality}. We note that there is an equi\-valence~\hbox{$\Brspace(S/R)\simeq\Omega^\infty\br(S/R)$} of infinite loop spaces.

\begin{remark}\label{rem:Thomason}
Thomason's general theory of homotopy colimits of symmetric monoidal categories (\cite{Thomason:Aarhus} and \cite{Thomason:CommAlg}) might provide a first step to obtain a more manageable description of these relative terms.
\end{remark}

The defining homotopy fibre sequences lead to exact sequences of homotopy groups. Together with the identifications from Proposition~\ref{thm:Pic_identification_S-algebras} and Corollary~\ref{cor:Pic_identification}, these read
\begin{gather*}
	\dots\to\pi_2\br(S/R)\to\pi_0\GL_1(R)\to\pi_0\GL_1(S)\to
	\pi_1\br(S/R)\to	\Pic(R)\to\\
	\to\Pic(S)\to
	\pi_0\br(S/R)\to\Br(R)\to\Br(S)\to\pi_{-1}\br(S/R)\to0.
\end{gather*}

In~\cite[Definition 2.6]{Baker+Richter+Szymik}, the relative Brauer group~$\Br(S/R)$ is defined as the kernel of the natural homomorphism
$\Br(R)\to\Br(S)$.

\begin{proposition}\label{prop:relative_relation}
	The relative Brauer group~$\Br(S/R)$ is naturally isomorphic to the cokernel of the natural boundary map~$\Pic(S)\to\pi_0\br(S/R)$.
\end{proposition}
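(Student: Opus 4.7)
The plan is to read the statement directly off the long exact sequence of homotopy groups displayed just before the proposition, which arises from the homotopy fibre sequence $\br(S/R)\to\br(R)\to\br(S)$. The relevant four-term segment is
\[
	\Pic(S)\stackrel{\partial}{\longrightarrow}\pi_0\br(S/R)\longrightarrow\Br(R)\longrightarrow\Br(S),
\]
where I have used the identification~$\pi_0\br(R)\cong\Br(R)$ from Proposition~\ref{prop:Br_is_a_group_2}, and similarly for~$S$, and where the map~$\pi_1\br(S)\cong\Pic(S)\to\pi_0\br(S/R)$ is the connecting homomorphism~$\partial$ of the fibration.

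Two applications of exactness finish the argument. Exactness at~$\Br(R)$ identifies the image of the map~\hbox{$\pi_0\br(S/R)\to\Br(R)$} with the kernel of~\hbox{$\Br(R)\to\Br(S)$}, which is by definition the relative Brauer group~$\Br(S/R)$ of~\cite[Definition~2.6]{Baker+Richter+Szymik}. Exactness at~$\pi_0\br(S/R)$ identifies the kernel of the same map with the image of~$\partial$. The first isomorphism theorem then yields a natural isomorphism
\[
	\pi_0\br(S/R)\,\big/\,\mathrm{image}(\partial)\;\cong\;\Br(S/R),
\]
and the left-hand side is precisely the cokernel of~$\partial$.

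I do not expect any real obstacle here: the long exact sequence has been set up explicitly, and the naturality of the resulting isomorphism follows from the naturality of the fibre sequence~\hbox{$\br(S/R)\to\br(R)\to\br(S)$}, which in turn is inherited from the naturality of the construction~$\br(-)$ recorded in Proposition~\ref{prop:naturality}.
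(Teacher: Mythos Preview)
Your argument is correct and matches the paper's own proof, which simply says that the statement is an immediate consequence of the definition of $\Br(S/R)$ as the kernel of $\Br(R)\to\Br(S)$ together with the long exact sequence. You have merely spelled out the two uses of exactness and the first isomorphism theorem that the paper leaves implicit.
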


\begin{proof}
	This is an immediate consequence of the definition of~$\Br(S/R)$ as the kernel of the natural homomorphism~$\Br(R)\to\Br(S)$ and the long exact sequence above.
\end{proof}

\begin{remark}
	We note that the theory of Brauer spaces presented here also has Bousfield local variants, building on~\cite[Definition 1.6]{Baker+Richter+Szymik}, and that it might be similarly interesting to study the behavior of the Brauer spaces under variation of the localizing homology theory.
\end{remark}


\subsection*{Eilenberg-Mac Lane spectra}\label{sec:EM}

Let us finally see how the definitions of the present paper work out in the case of Eilenberg-Mac Lane spectra. Let~$R$ be an ordinary commutative ring, and let~$\upH R$ denote its Eilenberg-Mac Lane spectrum. This means that we have two Brauer groups to compare:~$\Br(R)$ as defined in~\cite{Auslander+Goldman}, and~$\Br(\upH R)$ as defined in~\cite{Baker+Richter+Szymik}, where there is also produced a natural homomorphism
\begin{equation}\label{eq:EMmap}
	\Br(R)\longrightarrow\Br(\upH R)
\end{equation}
of groups, see~\cite[Proposition 5.2]{Baker+Richter+Szymik}. Using results from~\cite{Toen}, one can deduce that this homomorphism is an isomorphism if~$R$ is a separably closed field, because both sides are trivial, see~\cite[Proposition 5.5 and Remark 5.6]{Baker+Richter+Szymik}. In general, one may show that it is injective with a cokernel which is isomorphic to the product of~$\upH^1_\mathrm{et}(R;\ZZ)$ and the torsion-free quotient of~$\upH^2_\mathrm{et}(R;\GL_1)$. See~\cite{Johnson} and~\cite[Remark 5.3]{Baker+Richter+Szymik}. In particular, the map is an isomorphism for all fields~$K$. 

Using the spaces and spectra defined in~Section~\ref{sec:Brauer_for_commutative_rings} for~$R$ and in Section~\ref{sec:Brauer_for_commutative_S-algebras} for~$\upH R$, the homomorphism~\eqref{eq:EMmap} of abelian groups can now be refined to a map of spectra.

\begin{proposition}\label{prop:EMmap_spectra}
There is a natural map
\begin{equation}\label{eq:EMmap_spectra}
	\br(R)\longrightarrow\br(\upH R)
\end{equation}
of spectra that induces the homomorphism~\eqref{eq:EMmap} on components.
\end{proposition}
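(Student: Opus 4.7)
The plan is to induce the map \eqref{eq:EMmap_spectra} by applying the K-theory machine from Section~\ref{sec:background} to an Eilenberg-Mac Lane functor between the symmetric monoidal (simplicial) categories that underlie $\br(R)$ and $\br(\upH R)$. That is, I would like a suitably structured functor
\[
	\upH_*\colon s\calA^{fh}_R\longrightarrow s\calA^{fh}_{\upH R}
\]
and then appeal to the functoriality discussion at the end of Section~\ref{sec:background}.

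First, I would construct $\upH_*$ on objects and morphisms. On objects, send an Azumaya $R$-algebra $A$ to (a functorial cofibrant replacement of) the commutative $\upH R$-algebra $\upH A$; by \cite[Proposition~5.2]{Baker+Richter+Szymik}, this is indeed an Azumaya $\upH R$-algebra. For the morphisms in $s\calA^{fh}_R$: by Morita theory, an $R$-linear equivalence $\calM_A\to\calM_B$ is, up to natural isomorphism, given by $X\mapsto X\otimes_A M$ for some invertible $R$-symmetric $(A,B)$-bimodule $M$. Send it to the equivalence $X\mapsto X\wedge_{\upH A}\upH M$, noting that $\upH M$ is again invertible since $\upH$ is symmetric monoidal up to natural equivalence on flat modules, and proceed correspondingly on higher simplices using that $\upH$ preserves bimodule isomorphisms.

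Next, I would equip $\upH_*$ with the strictly unital symmetric monoidal structure required by the K-theory machine. The unit $R$ is sent strictly to the unit $\upH R$, and the canonical comparison map
\[
	\upH A\wedge_{\upH R}\upH B\longrightarrow\upH(A\otimes_R B)
\]
is a weak equivalence since every Azumaya $R$-algebra is finitely generated projective, and in particular flat, as an $R$-module. This provides the required natural transformation for the op-lax structure after passage to a homotopy inverse. Applying $\bfk$ then yields the desired map $\br(R)\to\br(\upH R)$ of spectra.

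To identify the induced map on $\pi_0$, recall from Propositions~\ref{prop:pi0_is_Brauer_1} and~\ref{prop:Br_is_a_group_2} that $\pi_0\br(R)\cong\Br(R)$ and $\pi_0\br(\upH R)\cong\Br(\upH R)$, with isomorphism classes represented by Azumaya algebras. By construction, $\upH_*$ sends $[A]$ to $[\upH A]$, which is precisely the definition of the homomorphism~\eqref{eq:EMmap} from \cite[Proposition~5.2]{Baker+Richter+Szymik}. The principal obstacle is point-set in nature: to make the construction above into a genuine simplicial functor equipped with the op-lax structure demanded by the K-theory machine, one must choose model categories of commutative $\upH R$-algebras and bimodule spectra carefully and use functorial cofibrant replacements, so that the canonical comparison maps can be assembled into strict natural transformations satisfying the coherence conditions on the nose.
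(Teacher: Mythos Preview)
Your proposal follows exactly the approach the paper takes: induce the map from the Eilenberg--Mac\,Lane functor $A\mapsto\upH A$ between the symmetric monoidal categories $s\calA^{fh}_R$ and $s\calA^{fh}_{\upH R}$, and then apply $\bfk$. The paper's own proof is in fact the two-sentence version of what you wrote, leaving entirely implicit the point-set coherence issues (cofibrant replacements, strictness of the monoidal comparison) that you honestly flag at the end.
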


\begin{proof}
	This map is induced by the Eilenberg-Mac Lane functor~$\upH\colon R\mapsto\upH R$. It induces functors between the symmetric monoidal categories used to define the Brauer spectra. 
\end{proof}

\begin{theorem}\label{prop:1-truncated}
	The homotopy fibre of the map~\eqref{eq:EMmap_spectra} is a~$0$-truncated spectrum. Its only non-trivial homotopy groups are~$\pi_0$ which is infinite cyclic, and~$\pi_{-1}$ which is isomorphic to the cokernel of the map~\eqref{eq:EMmap}, the product of~$\upH^1_\mathrm{et}(R;\ZZ)$ and the torsion-free quotient of~$\upH^2_\mathrm{et}(R;\GL_1)$.
\end{theorem}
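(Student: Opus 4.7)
The plan is to run the long exact sequence of homotopy groups attached to the defining fibre sequence $F \to \br(R) \to \br(\upH R)$, where $F$ denotes the homotopy fibre. The first step is to pin down the homotopy groups of source and target. By Corollary~\ref{cor:pis_of_Br(R)}, the source $\br(R)$ is concentrated in degrees zero, one, two, with groups $\Br(R)$, $\Pic(R)$, and $\GL_1(R)=R^\times$. For the target, Corollary~\ref{cor:Pic_identification} together with the fact that $\upH R$ has vanishing homotopy outside degree zero shows that $\br(\upH R)$ is likewise concentrated in degrees zero, one, two, with homotopy groups $\Br(\upH R)$, $\Pic(\upH R)$, and $R^\times$.

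Next I would identify the three induced maps. On $\pi_0$ the map is precisely the homomorphism~\eqref{eq:EMmap} of~\cite[Proposition 5.2]{Baker+Richter+Szymik}, which is injective with the desired cokernel by the discussion preceding the theorem. On $\pi_2$ the map is the identity on $R^\times$, because the K-theory machine identifies this factor on both sides with $\pi_0$ of the respective multiplicative group~$\GL_1$. On $\pi_1$ it is the natural map $\Pic(R)\to\Pic(\upH R)$, $M\mapsto \upH M$; here the essential input is the classical splitting
\begin{equation*}
	\Pic(\upH R)\cong\Pic(R)\oplus\ZZ,
\end{equation*}
with the $\ZZ$ factor generated by $\Sigma\upH R$, so that this map is the inclusion as the first summand.

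With these inputs in hand, the long exact sequence collapses into the stated description essentially automatically. The vanishing ranges of the two spectra force $\pi_n F=0$ for $n\geqslant3$ and for $n\leqslant-2$. At degree two the isomorphism on $R^\times$ makes $\pi_2 F$ vanish and kills the connecting contribution into $\pi_1 F$; the injectivity of $\Pic(R)\to\Pic(\upH R)$ then gives $\pi_1 F=0$. The residual exact segment
\begin{equation*}
	\Pic(\upH R)\longrightarrow\pi_0 F\longrightarrow\Br(R)\longrightarrow\Br(\upH R)\longrightarrow\pi_{-1}F\longrightarrow 0,
\end{equation*}
combined with the injectivity of~\eqref{eq:EMmap} and the cokernel $\Pic(\upH R)/\Pic(R)\cong\ZZ$, yields $\pi_0 F\cong\ZZ$ and $\pi_{-1}F\cong\operatorname{coker}(\Br(R)\to\Br(\upH R))$, which is the expression advertised in the theorem. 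The main hurdle is the splitting of $\Pic(\upH R)$ together with the naturality of the inclusion from $\Pic(R)$; once that is in hand everything else is a mechanical diagram chase on the long exact sequence.
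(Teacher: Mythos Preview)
Your proof is correct and follows essentially the same route as the paper's: both arguments identify the homotopy groups of source and target via the earlier corollaries, check that the induced maps on $\pi_2$, $\pi_1$, $\pi_0$ are respectively an isomorphism on $R^\times$, the inclusion $\Pic(R)\hookrightarrow\Pic(\upH R)$ with cokernel~$\ZZ$ (the paper cites \cite{Baker+Richter} for this, which is your ``classical splitting''), and the injective Brauer map~\eqref{eq:EMmap}, and then read off the homotopy of the fibre from the long exact sequence. The only difference is that you make the long exact sequence explicit whereas the paper leaves it implicit.
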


\begin{proof}
	If~$R$ is a commutative ring, then the natural equivalence
	\begin{displaymath}
		\gl_1(\upH R)\simeq\upH\GL_1(R)
	\end{displaymath}
	describes the spectrum of units of the Eilenberg-Mac Lane spectrum. It follows that~$\br(\upH R)$ is~$2$-truncated. As~$\br(R)$ is always~$2$-truncated by Corollary~\ref{cor:pis_of_Br(R)}, so is the homotopy fibre. On~$\pi_2$, the map~\eqref{eq:EMmap_spectra} induces an isomorphism between two groups both isomorphic to the group of units of~$R$. On~$\pi_1$, the map~\eqref{eq:EMmap_spectra} is the map
\begin{equation}\label{eq:Pic_map}
	\Pic(R)\longrightarrow\Pic(\upH R)
\end{equation}
induced by the Eilenberg-Mac Lane functor~$\upH$. A more general map has been studied in~\cite{Baker+Richter}, where the left hand side is replaced by the Picard group of graded~$R$-modules, and then the map is shown to be an isomorphism. For the present situation, this means that~\eqref{eq:Pic_map} is a monomorphism with cokernel isomorphic to the group~$\ZZ$ of integral grades. On~$\pi_0$, the map~\eqref{eq:EMmap_spectra} induces the map~\eqref{eq:EMmap} by Proposition~\ref{prop:EMmap_spectra}. As has been remarked at the beginning of this section, this map is injective with the indicated cokernel. The result follows. 
\end{proof}



\vfill

{\it Affiliation:}\\
Mathematisches Institut\\Heinrich-Heine-Universit\"at\\40225 D\"usseldorf\\Germany

{\it Current address:}\\
Department of Mathematical Sciences\\University of Copenhagen\\2100 Copenhagen~\O\\Denmark

{\it Email address:}\\
szymik@math.ku.dk

\end{document}